\documentclass[11pt,reqno]{amsart}
\setlength{\hoffset}{-.5in}
\setlength{\voffset}{-.25in}
\usepackage{amssymb,latexsym}

\usepackage{graphicx}
\usepackage{fancyhdr}
\usepackage[all]{xy}
\usepackage{enumerate}
\textwidth=6.175in
\textheight=8.5in

\theoremstyle{plain}
\numberwithin{equation}{section}
\newtheorem{thm}{Theorem}[section]
\newtheorem{theorem}[thm]{Theorem}

\newtheorem{corollary}[thm]{Corollary}

\newtheorem{lemma}[thm]{Lemma}

\newtheorem{definition}[thm]{Definition}

\newtheorem*{theorem*}{Theorem}

\begin{document}

\setcounter{page}{1}

\title[Multipliers of nilpotent Lie superalgebras]{Multipliers of nilpotent Lie superalgebras}
\author[Nayak]{Saudamini Nayak}
\address{Harish-Chandra Research Institute, HBNI, \\
         Chhatnag Road, Jhunsi, 
          Allahabad-211 019 \\
                India}
\email{ anumama.nayak07@gmail.com}

\thanks{Research supported by the Harish-Chandra Research Institute Postdoctoral Fellowship.}
\subjclass[2010]{Primary 17B30; Secondary 17B05.}
\keywords{Nilpotent Lie Superalgebra; Multiplier; Stem Extension; Special Heisenberg Lie Superalgebra}

\begin{abstract}\label{abstract}
In this paper, first we prove that all finite dimensional special Heisenberg Lie superalgebras with even center have same dimension, say  $(2m+1\mid n)$ for some non-negative integers $m,n$ and are isomorphism with them. Further, for a nilpotent Lie superalgebra $L$ of dimension $(m\mid n)$ and $\dim (L') = (r\mid s)$ with $r+s \geq 1$, we find the upper bound $\dim \mathcal{M}(L)\leq \frac{1}{2}\left[(m + n + r + s - 2)(m + n - r -s -1) \right] + n + 1$, where $\mathcal{M}(L)$ denotes the Schur multiplier of $L$. Moreover, if $(r, s) =(1, 0)\; (\mathrm{respectively}\; (r,s) = (0,1))$, then the equality holds if and only if $L \cong H(1,0) \oplus A_{1}\; (\mathrm{respectively}\; H(0,1) \oplus A_{2})$, where $A_{1}$ and $A_{2}$ are abelian Lie superalgebras with $\dim A_{1}=(m-3 \mid n), \dim A_{2}=(m-1 \mid n-1)$ and $H(1,0), H(0,1)$ are special Heisenberg Lie superalgebras of dimension $3$ and $2$ respectively.
\end{abstract}

\maketitle

\section{Introduction}\label{intro}
For a given group $G$, the notion of the Schur multiplier $\mathcal{M}(G)$ arose from the work of Schur on projective representation of groups as the second cohomology group with coefficients in $\mathbb{C}^{*}$. Let $0\longrightarrow R \longrightarrow F \longrightarrow G \longrightarrow 0$ be a free presentation of the group $G$, then it can be shown by Hopf's formula that $\mathcal{M}(G) \cong R\cap [F,F]/[R,F]$. Green in \cite{Green1956} proved that for every finite $p$-group $G$ of order $p^{n}$, $\mathcal{M}(G) = p^{\frac{1}{2}n(n-1)-t(G)}$ for some non-negative integer $t(G)$. Using the notion $t(G)$, one can easily classify finite $p$-groups. For example, Berkovich \cite{Berkovich1991} showed that $t(G) = 0$ if and only if $G$ is an elementary abelian $p$-group. Also, he classified all $p$-groups $G$ with $t(G)=1$. Similarly, the case  $t(G) = 2$ was studied by Zhou in  \cite{Zhou1994} and determined the structure of $p$-groups.

\smallskip
The finite dimensional Lie algebra analogue to the Schur multiplier was developed in \cite{Batten1993} and later it has been studied by several authors. Let $A$ be a Lie algebra over a field $\mathbb{F}$ with a free presentation $0\longrightarrow R \longrightarrow F \longrightarrow A \longrightarrow 0$, where $F$ is a free Lie algebra. Then the Schur multiplier of $A$, denoted by $\mathcal{M}(A)$, is defined to be the factor Lie algebra $R \cap[F,F]/[R,F]$ \cite{Batten1993}. It is easy to see that the Schur multiplier of a Lie algebra $A$ is abelian and independent of the choice of the free presentation of $A$. For  more information about the Schur multiplier of Lie algebras one can refer \cite{Batten1993,BMS1996,BS1996,NR2011} and the references given therein. Batten in \cite{Batten1993} showed that if $A$ is finite dimensional, then its Schur multiplier is isomorphic to $H^{2}(A,\mathbb{F})$, the second cohomology group of $A$.  Following the result of Green, Moneyhun in \cite{Moneyhun1994}, showed that for a Lie algebra $A$ of dimension $n$, $\dim \mathcal{M}(A) = \frac{1}{2}n(n-1)-t(A)$, where $t(A)$ is non-negative integer. For finite dimensional nilpotent Lie algebras, Batten et al., in \cite{BMS1996} obtained the result similar to Berkovich and Zhou. 

\smallskip
 
Recently, Niroomand in \cite{Nir2009} improved the result of Jones \cite[Theorem 3.1.4]{Kar1987} on the Schur multiplier of a non-abelian $p$-group. He proved that a non-abelian $p$-group $G$ of order $p^{n}$ with derived subgroup of order $p^{k}$ has, $\mathcal{M}(G)\leq p^{\frac{1}{2}(n+k-2)(n-k-1)+1}$. Further, Niroomand and Russo in \cite{NR2011}, have extended this result to the Lie algbera case. In particular, they have proved that $\dim \mathcal{M}(A)\leq \frac{1}{2}(n+m-2)(n-m-1)+1$, for a Lie algebra $A$ of dimension $n$ and for  dimension of its derived algebra $A'$ equal to $m$. In \cite{ESD2016}, it has been proved that if $A$ is nilpotent $n$-Lie algebra of dimension $d$ with $\dim A'= m \geq 1$, then $\dim \mathcal{M}(A)\leq \binom{d-m+1}{n} +(m-2)\binom{d-m}{n-1}+n-m$. In this paper, we study similar problem for Lie superalgebras.

\smallskip
The organization of the paper is as follows. Section \ref{pre} and \ref{bounds}, give some preliminary ideas about Lie superalgebras and some useful inequalities for $\mathcal{M}(L)$. In Section \ref{shl}, we study special Heisenberg Lie superalgebras and finally in Section \ref{mt}, we obtain a bound for the multiplier of finite dimensional nilpotent Lie superalgebras. 

\section{Preliminaries}\label{pre}
In this section we review some terminology on Lie superalgebras (see \cite{KAC1977, Musson2012}) and recall notions used in the paper. Throughout, $\mathbb{F}$ is an algebraically closed field of characteristic zero and all vector spaces and algebras are over $\mathbb{F}$ and are of finite dimensions. A $\mathbb{Z}_2$-graded vector space is also called a superspace. Let $V = V_{\bar{0}} \oplus V_{\bar{1}}$ be a superspace. We call elements of $V_{\bar{0}}$ (resp. $V_{\bar{1}}$) even (resp. odd). Non-zero elements of $V_{\bar{0}} \cup V_{\bar{1}}$ are {\it homogeneous} and for homogeneous $v \in V_{\sigma}$ with $\sigma \in \mathbb{Z}_2$, we set $|v| = \sigma$ the degree of $v$.  
\subsection{Definition of Lie superalgebra}
A {\it Lie superalgebra} is a superspace $L = L_{\bar{0}} \oplus L_{\bar{1}}$ with a bilinear mapping
$ [., .] : L \times L \rightarrow L$ satisfying the following identities:

\begin{enumerate}
\item $[L_{\alpha}, L_{\beta}] \subset L_{\alpha+\beta}$, for $\alpha, \beta \in \mathbb{Z}_{2}$ ($\mathbb{Z}_{2}$-grading),
\item $[x, y] = -(-1)^{|x||y|} [y, x]$ (graded skew-symmetry),
\item $(-1)^{|x||z|} [x,[y, z]] + (-1)^{ |y| |x|} [y, [z, x]] + (-1)^{|z| |y|} [z,[ x, y]] = 0$ (graded Jacobi identity),
\end{enumerate}
for all $x,y,z \in L$. 

\smallskip
For a Lie superalgebra $L = L_{\bar{0}} \oplus L_{\bar{1}}$, the even part $L_{\bar{0}}$ is a Lie algbera and $L_{\bar{1}}$ is $L_{\bar{0}}$-module. A Lie superalgebra without even part, i.e., $L_{\bar{0}} = 0$, is an abelian Lie superalgebra, that is $[x,y] = 0$ for all $x,y \in L$. Let $I$ and $J$ be two sub(super)algebras of a  Lie superalgebra $L$. We denote by $[I,J]$ the sub(super)algebra of $L$ spanned by all elements $[m,n]$ with $m \in I$ and $n\in J$. A $\mathbb{Z}_{2}$-graded subalgebra $I$ is a {\it graded ideal} of $L$ if $[I,L]\subseteq I$. In particular, the subalgebra $Z(L) = \{z\in L : [z,x] = 0\;\mathrm{for all}\;x\in L\}$ is a graded ideal and it is called the {\it center} of $L$. Clearly, if $I$ and $J$ are graded ideals of $L$, then so is $[I,J]$. For a graded ideal $I$ of $L$, the quotient $L/I$ inherits a canonical Lie superalgebra structure such that the natural projection map becomes a homomorphism, as defined below.

\smallskip
A linear mapping $f: V \longrightarrow W$ is said to be {\it homogeneous} of degree $\gamma,\; \gamma \in \mathbb{Z}_{2}$, if 
\[f(V_{\alpha}) \subset W_{\alpha + \gamma}\quad \mbox{for all}\;\; \alpha \in \mathbb{Z}_{2}.\]
The mapping $f$ is called a {\it homomorphism} of the $\mathbb{Z}_{2}$ vector space $V$ in to the $\mathbb{Z}_{2}$ vector space $W$ if $f$ is homogeneous of degree $0$. A Lie superalgebra homomorphism $f: V \rightarrow W$ is a  homomorphism (homogeneous linear mappings of degree $0$) such that $f[x,y] = [f(x), f(y)]$ for all $x,y \in V$. The notions of {\it epimorphisms, isomorphisms} and {\it auotomorphisms} have the obvious meaning. We write the superdimension of Lie superalgebra $L$ as $(m\mid n)$, where $\dim L_{\bar{0}} = m$ and $\dim L_{\bar{1}} = n$. 

\subsection{Free Lie superalgebra}
\begin{definition} [See p. 135 in \cite{Musson2012}]
The free Lie superalgebra on a $\mathbb{Z}_{2}$-graded set $X = X_{\bar{0}} \cup X_{\bar{1}}$ is a Lie superalgebra $F(X)$ together with a degree zero map $i: X \rightarrow F(X)$ such that if $M$ is any Lie superalgebra and $j: X \rightarrow M$ is a degree zero map, then there is a unique Lie superalgebra homomorphism $h: F(X) \rightarrow M$ with $j = h \circ i$.
\end{definition}
The existence of free Lie superalgebra is guaranteed by an analogue of Witt's theorem (see \cite[Theorem 6.2.1]{Musson2012}).
 
\smallskip
If $L$ is a Lie superalgebra generated by a $\mathbb{Z}_{2}$-graded set $X = X_{\bar{0}} \cup X_{\bar{1}}$  and $\phi : X \rightarrow L$ is a degree zero map, then there exists a free Lie superalgebra $F$ and  $\psi: F \rightarrow L$ extending $\phi$. Let $R = \ker (\psi)$. The extension 
\begin{equation}\label{eq0}
0 \longrightarrow R \longrightarrow F \longrightarrow L \longrightarrow 0
\end{equation} 
is called a {\it free presentation} of $L$ and is denoted by $(F, \psi)$. With this free presentation of $L$,  we define {\it multiplier} of $L$ as  
$$ 
\mathcal{M}(L) = \frac{[F,F]\cap R}{[F, R]}.
$$
Now from the following lemma one can see that multiplier of Lie superalgebra $L$ is isomorphic to $H_2(L)$, the second homology of $L$.
\begin{lemma}[ See Corollary 6.5(iii) in \cite{GKL2015}]
 Suppose equation \eqref{eq0}  is a free presentation of a Lie
superalgebra $L$. Then there is an isomorphism of supermodules
\[H_2(L) = \frac{[F,F]\cap R}{[F, R]}.\]
\end{lemma}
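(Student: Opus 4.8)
The plan is to deduce the isomorphism from the five-term exact sequence in Lie superalgebra homology attached to the extension \eqref{eq0}, following verbatim the classical Hopf-formula argument for groups and Lie algebras, with the single extra point of care that every map in sight is homogeneous of degree zero so that the final identification is one of $\mathbb{Z}_2$-graded supermodules.

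First I would recall the homology $H_\ast(L) = H_\ast(L,\mathbb{F})$ of a Lie superalgebra $L$ with trivial coefficients, computed from the super Chevalley--Eilenberg complex (the graded exterior/symmetric powers $\Lambda^\ast L$ governed by the Koszul sign rule), which in low degrees yields $H_0(L) \cong \mathbb{F}$ and $H_1(L) \cong L/[L,L]$. For the graded ideal $R \trianglelefteq F$ with $L = F/R$, the Lyndon--Hochschild--Serre (Hochschild--Serre) spectral sequence for Lie superalgebras provides the exact sequence of low-degree terms
\[
H_2(F) \longrightarrow H_2(L) \longrightarrow \bigl(H_1(R)\bigr)_L \longrightarrow H_1(F) \longrightarrow H_1(L) \longrightarrow 0,
\]
where $\bigl(H_1(R)\bigr)_L$ are the $L$-coinvariants of $H_1(R) \cong R/[R,R]$ under the action induced by the adjoint action of $F$, and all edge and connecting maps are homomorphisms of supermodules.

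Next I would feed in freeness of $F$. Since $F$ is a free Lie superalgebra, its universal enveloping algebra is a tensor superalgebra, hence of global dimension one, so $H_n(F) = 0$ for $n \geq 2$; in particular $H_2(F) = 0$, while $H_1(F) \cong F/[F,F]$. Moreover, because $[R,R] \subseteq [F,R]$, the coinvariants simplify to $\bigl(H_1(R)\bigr)_L \cong R/\bigl([R,R]+[F,R]\bigr) = R/[F,R]$. Thus the sequence collapses to an exact sequence
\[
0 \longrightarrow H_2(L) \longrightarrow R/[F,R] \overset{\iota}{\longrightarrow} F/[F,F],
\]
with $\iota$ induced by the inclusion $R \hookrightarrow F$, so $H_2(L) \cong \ker \iota$. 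Finally, unwinding $\ker\iota = \{\, r + [F,R] : r \in R \cap [F,F]\,\} = ([F,F]\cap R)/[F,R]$; since $\iota$ is a degree-zero map of $\mathbb{Z}_2$-graded spaces, its kernel carries the induced grading, which gives the asserted isomorphism of supermodules $H_2(L) \cong ([F,F]\cap R)/[F,R]$, independent of the chosen free presentation by the usual comparison argument.

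The hard part is not the formal manipulation but securing the homological infrastructure in the super category: one must either cite or reconstruct the super Chevalley--Eilenberg complex together with the Hochschild--Serre spectral sequence for a graded ideal, checking that the Koszul signs neither obstruct the five-term exact sequence nor spoil degree-zeroness of the differentials and edge maps. Once that machinery (available from the cited source \cite{GKL2015}) is in place, the Hopf-formula identification is purely formal. An alternative route, avoiding spectral sequences, is to build mutually inverse maps between $([F,F]\cap R)/[F,R]$ and $H_2(L)$ directly on the chains of the super Chevalley--Eilenberg complex; this is elementary but requires the same vigilance against the super signs when verifying well-definedness, so I would prefer the spectral-sequence argument above.
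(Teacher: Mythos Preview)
Your argument via the five-term exact sequence is correct and is the standard Hopf-formula derivation; the key inputs---$H_2(F)=0$ for a free Lie superalgebra (via global dimension one of its enveloping algebra), the identification $(H_1(R))_L \cong R/[F,R]$, and the kernel computation---are all sound, and your remark about degree-zeroness of the maps is the right point of care in the super setting.

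However, there is nothing to compare your proof against: the paper does not prove this lemma at all. It merely records the statement and cites it as Corollary~6.5(iii) of \cite{GKL2015}, using it only to identify the multiplier $\mathcal{M}(L)$ defined via free presentations with the second homology $H_2(L)$. So your proposal supplies an argument where the paper is content to quote the literature; what you have written is essentially the proof one would expect to find (in some form) in the cited reference.
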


\subsection{Extensions of Lie superalgebras}

An extension of a Lie superalgebra $L$ is a short exact sequence 
\begin{equation}\label{eq1}
\xymatrix{
0 \ar[r] & M\ar[r]^{e} & K\ar[r]^{f} & L \ar[r] & 0}. 
\end{equation}
Since $e: M \longrightarrow e(M) = \ker(f)$ is an isomorphism we will usually identify $M$ and $e(M)$. An extension of $L$ is then same as an epimorphism $f: K \longrightarrow L$. A homomorphism from an extension $f: K \longrightarrow L$ to another extension $f': K' \longrightarrow L$ is a Lie superalgebra homomorphism $g: K \longrightarrow K'$ satisfying $f = f' \circ g$; in other words, we have a commutative diagram, 
\begin{center}
$\xymatrix{
K \ar[rd]_{f} \ar[rr]^{g} & & K' \ar[ld]^{f'} \\
& L &} $ 
\end{center}
The concept of stem Lie algebra is defined and studied in \cite{Batten1993}. Here we define stem extension and stem cover analogously for Lie superalgebras. 

A {\it central extension} of $L$ is an extension \eqref{eq1} such that $M \subset Z(K)$. The central extension is said to be a {\it stem extension} of $L$ if $M \subseteq Z(K)\cap K' $. The stem extension is {\it maximal} if every epimorphism of any other stem extension of $L$ on to $0 \longrightarrow M \longrightarrow  K \longrightarrow   L \longrightarrow 0$ is necessarily an  isomorphism. Finally, we call the stem extension a {\it stem cover} if $M \cong \mathcal{M}(L)$ and in this case $K$ is said to be a cover of Lie superalgebra $L$. Equivalently, we define the stem cover as maximal defining pair for a Lie superalgebra $L$ as follows. 
\begin{definition}\label{def3}
A pair of Lie superalgebras $(K,M)$ is said to be a defining pair for $L$ if 
\begin{enumerate}
\item $L \cong K/M$,
\item $M\subset Z(K)\cap K'$.
\end{enumerate}
\end{definition}
In this paper, we have alternatively used the defination of maximal defining pair and stem cover for Lie superalgebra $L$.

\subsection{Nilpotent Lie superalgebras}
For an arbitrary Lie superlagebra one can define a descending central sequence as follows :
\begin{align*}
C^{0}(L) &= L \\
C^{k}(L) &= [ L, C^{k-1}(L)]\;\;\mbox{for}\;\; k\geq 1.
\end{align*}
\begin{definition}\label{def1}
A Lie superalgebra is called nilpotent if there exists a
positive integer $n$ such that $C^{n}(L) = 0$.
\end{definition}
 We define two sequences for a Lie superalgebra $L = L_{\bar{0}} \oplus L_{\bar{1}}$ as follows:
\begin{align*}
C^{0}(L_{\bar{0}}) &= L_{\bar{0}}, \quad C^{k} (L_{\bar{0}}) = [L_{\bar{0}}, C^{k-1} (L_{\bar{0}})]\;\;\mbox{for}\;\; k\geq 1  \\
C^{0}(L_{\bar{1}}) &= L_{\bar{1}}, \quad C^{k} (L_{\bar{1}}) = [L_{\bar{0}}, C^{k-1} (L_{\bar{1}})] = \mathrm{ad}_{L_{\bar{0}}}(C^{k-1} (L_{\bar{1}})\;\;\mbox{for}\;\; k\geq 1.
\end{align*}
\begin{lemma}[\cite{MG2001}]\label{lem2}
The Lie superalgebra $L = L_{\bar{0}} \oplus L_{\bar{1}}$ is nilpotent if and only if there exist  $(p, q)\in \mathbb{N} \times \mathbb{N}$ such that
$$
C^{p-1}(L_{\bar{0}}) \neq 0,\;\; C^{q-1}(L_{\bar{1}}) \neq 0\; \quad \mathrm{and}\quad C^{p}(L_{\bar{0}}) = C^{q}(L_{\bar{1}}) = {0}.$$
\end{lemma}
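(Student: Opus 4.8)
The plan is to reduce both implications to a statement about the adjoint action of the even part and then to invoke Engel's theorem for Lie superalgebras.

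For the forward implication, assume $C^{n}(L)=0$ for some $n$. A routine induction on $k$ shows $C^{k}(L_{\bar{0}})\subseteq C^{k}(L)$ and $C^{k}(L_{\bar{1}})\subseteq C^{k}(L)$: for $k=0$ both $L_{\bar{0}}$ and $L_{\bar{1}}$ lie inside $L=C^{0}(L)$, and since each recursion only brackets against $L_{\bar{0}}\subseteq L$ one has $C^{k}(L_{\bar{0}})=[L_{\bar{0}},C^{k-1}(L_{\bar{0}})]\subseteq[L,C^{k-1}(L)]=C^{k}(L)$, and likewise for $L_{\bar{1}}$. Hence $C^{n}(L_{\bar{0}})=C^{n}(L_{\bar{1}})=0$, and choosing $p,q$ minimal with $C^{p}(L_{\bar{0}})=0$ and $C^{q}(L_{\bar{1}})=0$ produces the required pair. (One may assume $L_{\bar{0}}\neq 0$ and $L_{\bar{1}}\neq 0$: if $L_{\bar{0}}=0$ then $L$ is abelian and if $L_{\bar{1}}=0$ then $L$ is an ordinary Lie algebra, so in either case there is nothing to prove.)

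For the converse, suppose $C^{p}(L_{\bar{0}})=0$ and $C^{q}(L_{\bar{1}})=0$. The first step is to note that $\mathrm{ad}(x)$ is a nilpotent operator on $L$ for every $x\in L_{\bar{0}}$: from $C^{p}(L_{\bar{0}})=0$ one gets $\mathrm{ad}(x)^{p}=0$ on $L_{\bar{0}}$, and from $C^{q}(L_{\bar{1}})=\mathrm{ad}(L_{\bar{0}})^{q}(L_{\bar{1}})=0$ one gets $\mathrm{ad}(x)^{q}=0$ on $L_{\bar{1}}$, so $\mathrm{ad}(x)^{\max(p,q)}=0$ on $L$. Since $\mathrm{ad}(y)^{2}=\frac{1}{2}\,\mathrm{ad}([y,y])$ with $[y,y]\in L_{\bar{0}}$ for odd $y$, it follows that $\mathrm{ad}(y)$ is nilpotent for every homogeneous $y\in L$; thus $\mathrm{ad}(L)$, viewed inside the general linear Lie superalgebra $\mathfrak{gl}(L)$, is a Lie subsuperalgebra all of whose even---indeed all of whose homogeneous---elements act as nilpotent endomorphisms.

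The second step applies Engel's theorem for Lie superalgebras to $\mathrm{ad}(L)$ acting on $L$: there is a homogeneous flag $0=V_{0}\subset V_{1}\subset\cdots\subset V_{d}=L$, with $d=\dim L$, such that $\mathrm{ad}(L)\,V_{i}\subseteq V_{i-1}$ for every $i$. With respect to an adapted homogeneous basis every $\mathrm{ad}(z)$ ($z$ homogeneous) is strictly upper triangular, so $\mathrm{ad}(z_{1})\cdots\mathrm{ad}(z_{d})=0$ for all homogeneous $z_{1},\dots,z_{d}$. Since $C^{d}(L)$ is spanned by the left-normed brackets $[z_{1},[z_{2},[\dots,[z_{d},z_{d+1}]\dots]]]=\mathrm{ad}(z_{1})\cdots\mathrm{ad}(z_{d})(z_{d+1})$ of $d+1$ homogeneous elements, this forces $C^{d}(L)=0$, i.e.\ $L$ is nilpotent.

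The step I expect to be the crux is the super-Engel theorem itself, which I would establish along the classical lines by induction on $\dim\mathrm{ad}(L)$: choose a maximal proper subsuperalgebra $M'$; since the adjoint action of a nilpotent operator is again nilpotent, the inductive hypothesis applied to $M'$ acting on $\mathrm{ad}(L)/M'$ shows that $M'$ is a graded ideal of codimension one; the common kernel $W=\{v\in L:M'v=0\}$ is then nonzero (by induction again) and stable under all of $\mathrm{ad}(L)$; and the single remaining generator acts nilpotently on $W$, hence annihilates some nonzero homogeneous $v_{1}\in W$, after which one iterates on $L/\mathbb{F}v_{1}$ to build the flag. If one prefers not to quote this, the same conclusion can be reached by bounding left-normed brackets directly; the only delicate point there is to control long runs of odd entries, which is exactly what the anticommutator identity $\{\mathrm{ad}(y),\mathrm{ad}(y')\}=\mathrm{ad}([y,y'])\in\mathrm{ad}(L_{\bar{0}})$ together with $\dim L_{\bar{1}}<\infty$ accomplishes---but this is precisely the bookkeeping that the super-Engel argument packages cleanly.
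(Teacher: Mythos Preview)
The paper does not supply its own proof of this lemma: it is quoted verbatim from \cite{MG2001} and left unproved, so there is no argument in the paper to compare yours against. Your proof is correct. The forward implication is immediate from the containment $C^{k}(L_{\bar{0}}),\,C^{k}(L_{\bar{1}})\subseteq C^{k}(L)$ as you argue; for the converse, your reduction to the super Engel theorem via the identity $\mathrm{ad}(y)^{2}=\tfrac{1}{2}\,\mathrm{ad}([y,y])$ for odd $y$ (valid here since $\mathrm{char}\,\mathbb{F}=0$) is sound, and the super Engel theorem you invoke is standard in the finite-dimensional setting assumed throughout the paper (see, e.g., Kac or Musson). One small remark: your parenthetical disposing of the degenerate cases $L_{\bar{0}}=0$ or $L_{\bar{1}}=0$ is necessary, since as stated the lemma tacitly presupposes both graded pieces are nonzero in order for the pair $(p,q)$ with $C^{p-1}(L_{\bar{0}})\neq 0$ and $C^{q-1}(L_{\bar{1}})\neq 0$ to exist.
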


\section{Bounds for $\mathcal{M}(L)$}\label{bounds}

In this section, we prove some inequalities and bounds for dimension of $\mathcal{M}(L)$ which are important ingredients to prove the main theorem in Section \ref{mt}. Schur proved that if $G$ is a group such that the order of $G/Z(G)$ is finite, then so is the derived subgroup of $G$. Similarly, Moneyhun \cite{Moneyhun1994} proved for a Lie algbera that if $\dim A/Z(A) = n$ then $\dim A' \leq n(n-1)/2$. The following theorem is a analogue result in Lie superalgebra.

\begin{theorem}\label{lem4}
Let $L$ be a Lie superalgebra with 
$\dim \left( L/Z(L)\right) = (m \mid n)$. Then 
$$\dim L'\leq \frac{1}{2}\left[(m+n)^2 + (n-m)\right].$$
\end{theorem}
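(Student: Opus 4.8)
The plan is to run the super-analogue of Moneyhun's transversal argument, keeping careful track of the signs introduced by the $\mathbb{Z}_2$-grading. Since $Z(L)$ is a graded ideal, I would first choose a homogeneous complement to it, i.e.\ homogeneous elements $x_1,\dots,x_m \in L_{\bar 0}$ and $y_1,\dots,y_n \in L_{\bar 1}$ whose images form a basis of $L/Z(L)$. Because the bracket is bilinear and $[z,L]=0$ for $z \in Z(L)$, expanding an arbitrary bracket $[a,b]$ in terms of this complement plus central parts shows that $L'=[L,L]$ is spanned by the brackets $[x_i,x_j]$, $[x_i,y_k]$, $[y_k,x_i]$ and $[y_k,y_l]$.

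Next I would shrink this spanning set using graded skew-symmetry. For even elements, $[x_i,x_i] = -(-1)^{0}[x_i,x_i]$ forces $[x_i,x_i]=0$ (characteristic zero) and $[x_j,x_i]=-[x_i,x_j]$, so only the $\binom{m}{2}$ brackets with $i<j$ survive. For the mixed brackets, $[y_k,x_i] = -(-1)^{0}[x_i,y_k] = -[x_i,y_k]$, so the $[y_k,x_i]$ are redundant and we keep the $mn$ brackets $[x_i,y_k]$. The delicate case is odd--odd: graded skew-symmetry gives $[y_k,y_l] = -(-1)^{|y_k||y_l|}[y_l,y_k] = [y_l,y_k]$, so the bracket is symmetric in its two odd arguments and the diagonal terms $[y_k,y_k]$ need \emph{not} vanish. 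Hence it suffices to retain the $\binom{n+1}{2}$ brackets $[y_k,y_l]$ with $k\le l$.

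Finally, adding the three contributions gives
\[
\dim L' \;\le\; \binom{m}{2} + mn + \binom{n+1}{2} \;=\; \tfrac{1}{2}\bigl[\,m(m-1) + 2mn + n(n+1)\,\bigr] \;=\; \tfrac{1}{2}\bigl[(m+n)^2 + (n-m)\bigr],
\]
which is exactly the asserted inequality.

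I expect the only genuine obstacle to be resisting the temptation to count the odd--odd brackets as $\binom{n}{2}$: unlike the ungraded situation, the symmetric part (the ``squares'' $[y_k,y_k]$) is not annihilated by skew-symmetry in characteristic zero, and precisely this extra $n$ worth of brackets accounts for the $+(n-m)$ correction, as opposed to the $-(m+n)$ one would get from a naive $\binom{m+n}{2}$ bound. All remaining steps are the routine ``brackets of a transversal span the derived algebra'' bookkeeping.
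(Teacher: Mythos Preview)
Your proposal is correct and follows essentially the same approach as the paper: pick homogeneous representatives for a basis of $L/Z(L)$, observe that $L'$ is spanned by their pairwise brackets, and use graded skew-symmetry to reduce the count to $\binom{m}{2}+mn+\binom{n+1}{2}$. The paper records this last term as $\binom{n}{2}+n$, but that is of course the same number, and your explicit remark about why the diagonal odd--odd brackets survive is exactly the point underlying the paper's count.
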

\begin{proof}
Let $\{\bar{v}_{1},\cdots, \bar{v}_{m};\bar{v}_{m+1},\cdots, \bar{v}_{m+n}\}$ be a basis for $L/Z(L)$. Then the generating sets for $L'$ are $\{[v_{i},v_{j}]\mid 1\leq i< j\leq m\},\; \{[v_{k},v_{l}] \mid m+1\leq k \leq l\leq m+n\}$ and $\{[v_{i},v_{j}]\mid 1\leq i\leq m\; \mathrm{and}\;m + 1\leq j\leq m + n\}$. Using the skew-super symmetric property of Lie superalgebra, we have
\begin{align*}
\dim L'\leq & \binom{m}{2} + \binom{n}{2} + n + mn\\
		  = & \frac{1}{2}\left[(m+n)^2 + (n-m)\right].
\end{align*}
\end{proof}

\begin{corollary}\label{lem5}
Let $L$ be a Lie superalgebra with 
$\dim\ L= (m\mid n)$ and $0\longrightarrow M \longrightarrow K \longrightarrow L \longrightarrow 0$ be a stem extension of $L$. Then $\dim K \leq \frac{1}{2}\left[(m+n)^2 + (m + 3n)\right]$. 
\end{corollary}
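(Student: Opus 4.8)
The plan is to deduce this from Theorem \ref{lem4} applied to $K$ itself, using crucially that a stem extension satisfies both $M\subseteq Z(K)$ and $M\subseteq K'$. First I would exploit centrality: since $M\subseteq Z(K)$, the quotient map $K\to K/Z(K)$ factors through $K/M\cong L$, so there is an epimorphism of Lie superalgebras $L\cong K/M\twoheadrightarrow K/Z(K)$. Because Lie superalgebra homomorphisms are homogeneous of degree zero, they send even part onto even part and odd part onto odd part; hence if $\dim\bigl(K/Z(K)\bigr)=(m'\mid n')$, then $m'\le m$ and $n'\le n$.

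Next I would apply Theorem \ref{lem4} to $K$, obtaining $\dim K'\le \tfrac12\bigl[(m'+n')^{2}+(n'-m')\bigr]$. Since the extension is a stem extension, $M\subseteq K'$, so in particular
\[
\dim M\le \dim K'\le \tfrac12\bigl[(m'+n')^{2}+(n'-m')\bigr].
\]
To replace $(m',n')$ by $(m,n)$ I would record the elementary monotonicity of the function $f(a,b)=\tfrac12\bigl[(a+b)^{2}+(b-a)\bigr]=\binom{a}{2}+\binom{b}{2}+b+ab$: over nonnegative integers $f$ is nondecreasing in each variable, so $\dim M\le f(m',n')\le f(m,n)=\tfrac12\bigl[(m+n)^{2}+(n-m)\bigr]$.

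Finally, from the short exact sequence $0\to M\to K\to L\to 0$ we have $\dim K=\dim M+\dim L=\dim M+(m+n)$, and substituting the bound on $\dim M$ gives
\[
\dim K\le \tfrac12\bigl[(m+n)^{2}+(n-m)\bigr]+(m+n)=\tfrac12\bigl[(m+n)^{2}+(m+3n)\bigr],
\]
as claimed. There is no real obstacle here; the only points needing a line of care are that the projection $K/M\to K/Z(K)$ respects the $\mathbb{Z}_2$-grading (so the graded dimensions only shrink) and the monotonicity of $f$, both of which are immediate.
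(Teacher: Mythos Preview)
Your proof is correct and follows essentially the same route as the paper: use $M\subseteq Z(K)$ to get an epimorphism $L\cong K/M\twoheadrightarrow K/Z(K)$, invoke Theorem~\ref{lem4} to bound $\dim K'$, then use $M\subseteq K'$ and add $\dim L$. The paper simply writes $\dim(K/Z(K))\le \dim(K/M)=(m\mid n)$ and applies Theorem~\ref{lem4} with $(m,n)$ directly, implicitly using the monotonicity of $\tfrac12[(a+b)^2+(b-a)]$ that you spell out explicitly; your version is in fact slightly more careful on this point.
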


\begin{proof}
Since both $K$ and $M$ are Lie superalgebras, 
$$
(K/M)' = ((K_{\bar{0}} \oplus K_{\bar{1}})/(M_{\bar{0}} \oplus M_{\bar{1}}))' = K'_{\bar{0}} + K'_{\bar{1}} + [K_{\bar{0}}, K_{\bar{1}}] + (M_{\bar{0}} \oplus M_{\bar{1}}) = K' + M,$$ 
where even and odd part of $K'$ are $K'_{\bar{0}} + K'_{\bar{1}}$ and $[K_{\bar{0}}, K_{\bar{1}}]$ respectively. Using the definition of stem extension, we have 
$$
\dim (K/Z(K))\leq \dim (K/M) = \dim\ L = (m\mid n).
$$

By Theorem \ref{lem4}, $\dim M\leq \dim K'\leq \frac{1}{2}\left[(m+n)^2 + (n-m)\right]$.
Thus, 
\begin{align*}
\dim K = & m + n + \dim M  \\
	   \leq & m + n + \frac{1}{2}\left[(m+n)^2 + (n-m)\right]\\
	   = & \frac{1}{2}\left[(m+n)^2 + (m + 3 n)\right].
\end{align*}
\end{proof}
 
From Corollary \ref{lem5}, one can conclude that for a finite dimensional Lie superalgebra, $\dim K$ is bounded. Thus, we can find $K$ of maximal dimension.

\begin{definition}\label{def6}
If $\dim K$ is maximal then $(K,M)$ is called a maximal defining pair. For this maximal defining pair, $K$ is called a cover for $L$ and $M$ is called a multiplier which is denoted as $\mathcal{M}(L)$.
\end{definition} 

\begin{theorem}\label{lem4a}
Let $L$ be a Lie superalgebra with 
$\dim \L = (m \mid n)$. Then 
$$\dim \mathcal{M}(L)\leq \frac{1}{2}\left[(m+n)^2 + (n-m)\right].$$
\end{theorem}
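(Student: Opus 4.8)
The plan is to deduce the bound directly from Corollary \ref{lem5}, applied to a cover of $L$. By Definition \ref{def6} (which is legitimate because the remark following Corollary \ref{lem5} guarantees that $\dim K$ is bounded over all stem extensions of $L$), there is a maximal defining pair $(K,\mathcal M(L))$ for $L$; the associated short exact sequence $0 \longrightarrow \mathcal M(L) \longrightarrow K \longrightarrow L \longrightarrow 0$ satisfies $\mathcal M(L) \subseteq Z(K) \cap K'$ by Definition \ref{def3}, so in particular it is a stem extension of $L$ and $\dim \mathcal M(L) \le \dim K'$.

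Next I would re-run the estimate from the proof of Corollary \ref{lem5}. Since $\mathcal M(L) \subseteq Z(K)$, the natural surjection $K/\mathcal M(L) \longrightarrow K/Z(K)$ together with the isomorphism $K/\mathcal M(L) \cong L$ shows that $\dim\!\left(K/Z(K)\right) = (p \mid q)$ for some $p \le m$ and $q \le n$. Applying Theorem \ref{lem4} to $K$ then gives
$$\dim \mathcal M(L) \;\le\; \dim K' \;\le\; \frac{1}{2}\left[(p+q)^2 + (q-p)\right].$$

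It remains to compare the right-hand side with $\frac{1}{2}\left[(m+n)^2 + (n-m)\right]$. Setting $f(x,y) = \frac{1}{2}\left[(x+y)^2 + (y-x)\right]$, one computes $\partial_x f = (x+y) - \frac{1}{2}$ and $\partial_y f = (x+y) + \frac{1}{2}$, so on $\N \times \N$ the function $f$ is nonnegative and is nondecreasing in each variable on the region $x + y \ge 1$. Hence $p \le m$ and $q \le n$ give $f(p,q) \le f(m,q) \le f(m,n)$ when $p+q \ge 1$, while $f(p,q) = 0 \le f(m,n)$ when $p+q = 0$; in either case $\dim \mathcal M(L) \le f(m,n) = \frac{1}{2}\left[(m+n)^2 + (n-m)\right]$, as required.

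I do not anticipate a genuine obstacle: the statement is essentially Corollary \ref{lem5} read off at a cover. The one point deserving care is the monotonicity step above, which the proof of Corollary \ref{lem5} elides by silently replacing $\dim(K/Z(K))$ with $\dim L = (m \mid n)$; making it explicit as here completes the chain of inequalities.
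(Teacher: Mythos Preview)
Your proposal is correct and follows essentially the same route as the paper: take a stem cover (equivalently, a maximal defining pair) $0\to M\to K\to L\to 0$, use $M\subseteq Z(K)$ to bound $\dim(K/Z(K))$ by $\dim L=(m\mid n)$, apply Theorem~\ref{lem4} to bound $\dim K'$, and conclude via $M\subseteq K'$. The only difference is that you make explicit the monotonicity of $f(x,y)=\tfrac12[(x+y)^2+(y-x)]$ needed to pass from $\dim(K/Z(K))=(p\mid q)$ with $p\le m$, $q\le n$ to the bound in $(m,n)$; the paper's proof simply writes ``$\dim(K/Z(K))\le(m\mid n)$'' and invokes Theorem~\ref{lem4} directly, leaving this step implicit (it is clear from the combinatorial form $\binom{m}{2}+\binom{n}{2}+n+mn$ in the proof of Theorem~\ref{lem4}).
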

\begin{proof}
Let $0\longrightarrow M \longrightarrow K \longrightarrow L \longrightarrow 0$ be a stem cover of $L$. As $M \subseteq Z(K)$, we have $\dim (K/Z(K))\leq \dim (K/M) = \dim\ L = (m\mid n)$. Hence by Theorem $\ref{lem4}$, $\dim K' \leq \frac{1}{2}\left[(m+n)^2 + (n-m)\right]$. Again since $M \subseteq K'$ and $M \cong \mathcal{M}(L)$, it follows that 
\[\dim \mathcal{M}(L) = \dim M \leq \dim K' \leq \frac{1}{2}\left[(m+n)^2 + (n-m)\right],\]
as required.
\end{proof}

\begin{theorem}\label{lem8}
Let $L$ be a Lie superalgebra with $\dim\ L = (m\mid n)$. Then  
$$
\dim \mathcal{M}(L) = \frac{1}{2}\left[(m+n)^2 + (n-m)\right]$$ 
if and only if $L$ is abelian.
\end{theorem}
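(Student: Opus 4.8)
\emph{Plan of proof.} The equivalence splits into the two obvious implications, and both become short once the bounds of Section \ref{bounds} are available. I would begin with a bookkeeping remark: the quantity $g(a,b):=\frac{1}{2}\big[(a+b)^2+(b-a)\big]$ occurring in Theorems \ref{lem4} and \ref{lem4a} can be rewritten as $g(a,b)=\binom{a+b}{2}+b$, and in this form it is visibly non-decreasing in each argument, so that $g(a',b')\le g(a,b)$ whenever $a'\le a$ and $b'\le b$. This monotonicity is the only analytic ingredient the argument needs.

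For the implication that equality forces $L$ to be abelian, I would fix a stem cover $0\longrightarrow M\longrightarrow K\longrightarrow L\longrightarrow 0$, so that $M\cong\mathcal{M}(L)$ and $M\subseteq Z(K)\cap K'$. Since $M\subseteq Z(K)$ we get $\dim\big(K/Z(K)\big)\le\dim(K/M)=\dim L=(m\mid n)$ componentwise; writing $\dim\big(K/Z(K)\big)=(m'\mid n')$ with $m'\le m$ and $n'\le n$, Theorem \ref{lem4} applied to $K$ together with the monotonicity of $g$ gives
\[
g(m,n)=\dim\mathcal{M}(L)=\dim M\le\dim K'\le g(m',n')\le g(m,n).
\]
Hence all of these terms coincide; in particular $\dim M=\dim K'$, and since $M\subseteq K'$ this forces $M=K'$ (comparing graded components). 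Therefore $L\cong K/M=K/K'$ is abelian.

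For the converse I would compute $\mathcal{M}(L)$ for abelian $L$ by exhibiting a cover explicitly. View $L$ as a $\mathbb{Z}_2$-graded vector space $V$ and set $K:=V\oplus\Lambda^2V$, where $\Lambda^2V$ denotes the \emph{super} exterior square, whose even part is $\Lambda^2V_{\bar{0}}\oplus S^2V_{\bar{1}}$ and whose odd part is $V_{\bar{0}}\otimes V_{\bar{1}}$, equipped with the bracket extending $[v,w]=v\wedge w$ for $v,w\in V$ and $[\Lambda^2V,K]=0$. Graded skew-symmetry holds by construction and the graded Jacobi identity is automatic, since every triple bracket lies in $[\Lambda^2V,\cdot]=0$. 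Then $K'=\Lambda^2V\subseteq Z(K)$ and $K/\Lambda^2V\cong L$, so $(K,\Lambda^2V)$ is a defining pair with $\dim\Lambda^2V=\binom{m}{2}+mn+\binom{n+1}{2}=g(m,n)$; since by Theorem \ref{lem4a} no defining pair of $L$ can have a larger $M$, this one is maximal and $\dim\mathcal{M}(L)=g(m,n)$. (Alternatively, starting from a free presentation $0\longrightarrow R\longrightarrow F\longrightarrow L\longrightarrow 0$ one observes that abelianness of $L$ forces $R=[F,F]$, whence $\mathcal{M}(L)=[F,F]/[F,[F,F]]$ is the degree-two homogeneous component of the free Lie superalgebra on $(m\mid n)$ generators, which is precisely $\Lambda^2V$.)

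I expect the one genuinely delicate point to lie in this last direction: one must work with the \emph{super} exterior square, in which graded skew-symmetry turns the odd--odd part into the \emph{symmetric} square $S^2V_{\bar{1}}$ rather than the alternating one, and one should check that no relation coming from the graded Jacobi identity enters in degree two. Everything else is a repackaging of Theorems \ref{lem4} and \ref{lem4a}.
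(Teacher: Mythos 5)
Your proof is correct, and it splits against the paper as follows. The ``equality implies abelian'' half is essentially the paper's own argument: the paper also takes a maximal defining pair (stem cover), bounds $\dim M\leq\dim K'$ via Theorem \ref{lem4} applied to $K$ exactly as you do (the monotonicity of $g$ that you make explicit is used silently there), and concludes $M=K'$, hence $L\cong K/K'$ abelian. Where you genuinely diverge is the abelian direction. The paper lifts a basis of $L$ through a defining pair $(K,W)$ by a section $\mu$, notes that $W\subseteq K'$ is spanned by the brackets $y_{i,j}=[\mu(v_i),\mu(v_j)]$, and from this counting argument asserts that the maximal pair realizes $\dim W=\frac{1}{2}\left[(m+n)^2+(n-m)\right]$; the attainment of the bound is claimed rather than constructed. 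You instead exhibit the extremal pair explicitly, $K=V\oplus\Lambda^{2}V$ with the super exterior square central and $[v,w]=v\wedge w$, verify it is a Lie superalgebra and a defining pair, and then invoke Theorem \ref{lem4a} (together with $\dim K=\dim L+\dim M$, so maximizing $\dim K$ maximizes $\dim M$) for maximality. This makes the lower bound fully explicit, and the delicate point you flag is handled correctly: the odd--odd part must be $S^{2}V_{\bar{1}}$, giving the count $\binom{m}{2}+mn+\binom{n+1}{2}=\frac{1}{2}\left[(m+n)^2+(n-m)\right]$, which is the same phenomenon as the relations $[y_j,y_j]=z+v_j$ in the cover constructed in Theorem \ref{th15}. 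One small caveat: in your parenthetical alternative, abelianness of $L$ only forces $[F,F]\subseteq R$ for an arbitrary free presentation; the equality $R=[F,F]$ needs the presentation to be on exactly $(m\mid n)$ generators, which you may assume since the multiplier is independent of the presentation, but that hypothesis should be stated if you keep the remark.
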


\begin{proof}
Suppose $L$ is an abelian Lie superalgebra of dimension $(m\mid n)$ with basis \\
$\{v_{1}, v_2,\cdots, v_m; v_{m+1},\cdots, v_{m+n}\}$. Let $(K,W)$ be a defining pair for $L$. Let $\pi: K \longrightarrow L$ be an onto homomorphism such that $W=\ker \pi$. Consider a section $\mu$ of homomorphism $\pi$, which  is a homomorphism from $L\longrightarrow K$ such that $\pi \circ \mu =Id$. For $1\leq i\leq m+n$ define $\mu(v_{i}) = x_{i}$ and $y_{i,j}= [x_{i}, x_{j}]$. Here $y_{i,j}\in W$ since \begin{equation*}
\pi(y_{i,j}) = \pi [x_{i}, x_{j}] = \pi [\mu(v_i), \mu(v_j)] = [\pi \circ \mu(v_i),\pi \circ \mu(v_j)] = [v_i,v_j] = 0.
\end{equation*}
Now the generating set for $W$ are
$\{y_{i,j} = [x_{i},x_{j}]\mid 1\leq i< j\leq m\},\; \{y_{k,l}= [x_{k},x_{l}] \mid m+1\leq k \leq l\leq m+n\}$ and $\{y_{r,s} =  [x_{r},x_{s}]\mid 1\leq r\leq m\; \mathrm{and}\;m+1\leq s\leq m+n\}$. Let $J = \mu(L)$ and $K$ is a vector space direct sum of $J$ and $W$. Therefore, $x_{i}\; \left(1\leq i \leq m+n\right)$ and $y_{i,j}\; (1\leq i< j\leq m), y_{k,l}\; (m+1\leq k \leq l\leq m+n), y_{r,s} \left(1\leq r\leq m, m+1\leq s\leq m+n\right)$ generate $K$. Since $L$ is abelian and $W\subset Z(K)$, bracket of any three elements in $K$ is zero. Hence $(K,W)$ is maximal defining pair for $L$. Thus $W$ is the multiplier of $L$ and 
$$
\dim \mathcal{M}(L) = \dim W = \frac{1}{2}\left[(m+n)^2 + (n-m)\right].$$

Conversely, assume that $\dim \mathcal{M}(L) = \frac{1}{2}\left[(m+n)^2 + (n-m)\right]$. Let $(K, M)$ be a maximal defining pair for $L$. From the proof of Corollary \ref{lem5}, we have $\dim M \leq \dim K'\leq \frac{1}{2}\left[(m+n)^2 + (n-m)\right]$. Thus we have, 
$$
\frac{1}{2}\left[(m+n)^2 + (n-m)\right] = \dim \mathcal{M}(L)  = \dim M \leq \dim K' \leq \frac{1}{2}\left[(m+n)^2 + (n-m)\right].$$ Therefore we conclude that $M = K'$ and hence $L \cong K/M = K/K'$ is abelian. 
\end{proof}

The following result is proved in \cite[Theorem 2.5.2]{Kar1987} for groups and in \cite[Lemma 4]{BMS1996} for Lie algebras. Following their ideas we prove the same for Lie superalgebras. Though the proofs are in the same line, we have provided it for the sake of completeness.

\begin{lemma}\label{lem9}
Let $L$ be a finite dimensional Lie superalgebra with graded ideal $K$ and set $H = H/K = (L/K)_{\bar{0}} \oplus (L/K)_{\bar{1}}$. Then there exists a finite dimensional Lie superalgebra $G=G_{\bar{0}} \oplus G_{\bar{1}} $ with a graded ideal $M$ such that 
\begin{enumerate}[(i)]
\item $L'\cap K \cong G/M$, i.e., $(L'_{\bar{0}} + L'_{\bar{1}})\cap K_{\bar{0}} \cong \left(G/M \right)_{\bar{0}}$ and   $[L_{\bar{0}} , L_{\bar{1}}]\cap K_{\bar{1}} \cong \left(G/M\right)_{\bar{1}},$
\item $M \cong \mathcal{M}(L),$
\item $\mathcal{M}(H)$ is a homomorphic image of $G$,
\item if $K\subset Z(L)$, then $L'\cap K$ is an epimorphic image of $\mathcal{M}(H).$
\end{enumerate}
\end{lemma}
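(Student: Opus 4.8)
The plan is to obtain $G$ from a free presentation of $L$ by pulling back the graded ideal $K$, and then to read off (i)--(iv) by routine manipulations in the lattice of graded ideals of a free Lie superalgebra. First I would fix a free presentation $0 \longrightarrow R \longrightarrow F \stackrel{\psi}{\longrightarrow} L \longrightarrow 0$ as in \eqref{eq0}, so that $F = F(X)$ is free on a $\mathbb{Z}_2$-graded set and $\psi$ is a degree zero epimorphism; then $R = \ker\psi$ is a graded ideal. Set $S := \psi^{-1}(K)$. Since $\psi$ has degree zero and $K$ is a graded ideal, $S$ is a graded ideal with $R \subseteq S$, $S/R \cong K$, and $F/S \cong (F/R)/(S/R) \cong L/K = H$. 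Hence $0 \longrightarrow S \longrightarrow F \longrightarrow H \longrightarrow 0$ is a free presentation of $H$, and so $\mathcal{M}(H) \cong ([F,F] \cap S)/[F,S]$.

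The candidate algebra and ideal are
$$G := \frac{[F,F] \cap S}{[F,R]}, \qquad M := \frac{[F,F] \cap R}{[F,R]}.$$
Because $[F,R] \subseteq [F,F] \cap R \subseteq [F,F] \cap S$ and all of these subspaces are graded, $M$ is a graded ideal of $G$ (in fact $M \subseteq Z(G)$), and statement (ii) is immediate since $M$ is, by definition, $\mathcal{M}(L)$. For (iii), from $R \subseteq S$ we get $[F,R] \subseteq [F,S]$, so the identity on $[F,F] \cap S$ induces a degree zero epimorphism $G \twoheadrightarrow ([F,F] \cap S)/[F,S] \cong \mathcal{M}(H)$, exhibiting $\mathcal{M}(H)$ as a homomorphic image of $G$.

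For (i) I would compute $G/M \cong ([F,F] \cap S)/([F,F] \cap R)$ and identify the right-hand side with $L' \cap K$. Since $\psi([F,F]) = L'$ with $\ker\psi = R$, we have $\psi^{-1}(L') = [F,F] + R$, hence $\psi^{-1}(L' \cap K) = ([F,F] + R) \cap S = ([F,F] \cap S) + R$ by the modular law (using $R \subseteq S$); factoring out $R$ then gives $L' \cap K \cong ([F,F] \cap S)/([F,F] \cap R) \cong G/M$. All maps involved are restrictions of $\psi$ or inclusions, hence homogeneous of degree zero, so the isomorphism is grading preserving and yields the asserted isomorphisms of even and odd parts. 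In particular $G$ is an extension of the finite dimensional $L' \cap K$ by the finite dimensional $\mathcal{M}(L)$, so $\dim G < \infty$. Finally, for (iv), if $K \subseteq Z(L)$ then $[L,K] = 0$, so $\psi([F,S]) = [L,K] = 0$, whence $[F,S] \subseteq R$ and therefore $[F,S] \subseteq [F,F] \cap R$; composing the quotient map $([F,F] \cap S)/[F,S] \twoheadrightarrow ([F,F] \cap S)/([F,F] \cap R)$ with the identifications already made exhibits $L' \cap K$ as an epimorphic image of $\mathcal{M}(H)$.

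I do not expect a serious obstacle: the content is formal, resting only on the definition of $\mathcal{M}(-)$ via a free presentation together with the remark that $S = \psi^{-1}(K)$ furnishes a free presentation of $H$. The points requiring attention are purely bookkeeping: checking that every ideal that appears ($R$, $S$, $[F,R]$, $[F,S]$, $[F,F]\cap S$, and so on) is $\mathbb{Z}_2$-graded and that every connecting map has degree zero (automatic once the free presentation is chosen graded), and the single use of the modular law for graded subspaces with $R \subseteq S$.
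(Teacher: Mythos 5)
Your proposal is correct and follows essentially the same route as the paper: both fix a free presentation $0\to R\to F\to L\to 0$, pull $K$ back to a graded ideal ($your S$ is the paper's $T$), and take $G=([F,F]\cap T)/[F,R]$, $M=([F,F]\cap R)/[F,R]$, with (i)--(iv) following from the isomorphism theorems and, for (iv), the observation that centrality of $K$ forces $[F,T]\subseteq R$. Your write-up merely streamlines the paper's componentwise $\mathbb{Z}_2$-graded bookkeeping by invoking the modular law and degree-zero maps directly.
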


\begin{proof}
Let $ 0 \longrightarrow R_{\bar{0}}\oplus R_{\bar{1}} \longrightarrow F_{\bar{0}}\oplus F_{\bar{1}} \longrightarrow L_{\bar{0}}\oplus L_{\bar{1}} \longrightarrow 0 $ be a free presentation of Lie superalgebra $L$ and $K= (T/ R)_{\bar{0}} \oplus (T/ R)_{\bar{1}}$ for some graded ideal $T$ of $F$. 

Now, \begin{align*}
L' \cap K & = \left(\frac{F_{\bar{0}} \oplus F_{\bar{1}}}{R_{\bar{0}} \oplus R_{\bar{1}}}\right)' \cap \left(\frac{T_{\bar{0}} \oplus T_{\bar{1}}}{R_{\bar{0}} \oplus R_{\bar{1}}} \right)\\
& =\left(\frac{\left(F_{\bar{0}} \oplus F_{\bar{1}}\right)' + R_{\bar{0}} \oplus R_{\bar{1}}}{R_{\bar{0}} \oplus R_{\bar{1}}} \right)\cap \left(\frac{T_{\bar{0}} \oplus T_{\bar{1}}}{R_{\bar{0}} \oplus R_{\bar{1}}}\right)\\
& = \left(\frac{F_{\bar{0}}' + F_{\bar{1}}' + R_{\bar{0}}}{R_{\bar{0}}} \oplus \frac{[F_{\bar{0}},  F_{\bar{1}}] + R_{\bar{1}}}{R_{\bar{1}}} \right) \cap \left( \frac{T_{\bar{0}}}{R_{\bar{0}}} \oplus\frac{T_{\bar{1}}}{R_{\bar{1}}} \right)\\
& = \frac{((F_{\bar{0}}' + F_{\bar{1}}') + R_{\bar{0}}) \cap T_{\bar{0}}}{R_{\bar{0}}} \oplus  \frac{([F_{\bar{0}}, F_{\bar{1}}] + R_{\bar{1}}) \cap T_{\bar{1}}}{R_{\bar{1}}} \cong \frac{(F_{\bar{0}}' + F_{\bar{1}}')\cap T_{\bar{0}}}{(F_{\bar{0}}' + F_{\bar{1}}')\cap R_{\bar{0}}} \oplus  \frac{[F_{\bar{0}}, F_{\bar{1}}]  \cap T_{\bar{1}}}{[F_{\bar{0}}, F_{\bar{1}}] \cap R_{\bar{1}}} \\
& = \frac{((F_{\bar{0}}' + F_{\bar{1}}')\cap T_{\bar{0}})/([F_{\bar{0}}, R_{\bar{0}}] + [F_{\bar{1}}, R_{\bar{1}}])}{((F_{\bar{0}}' + F_{\bar{1}}')\cap R_{\bar{0}})/([F_{\bar{0}}, R_{\bar{0}}] + [F_{\bar{1}}, R_{\bar{1}}])}\oplus \frac{([F_{\bar{0}},F_{\bar{1}}]\cap T_{\bar{1}})/([F_{\bar{0}},R_{\bar{1}}])}{([F_{\bar{0}},F_{\bar{1}}]\cap R_{\bar{1}})/([F_{\bar{0}},R_{\bar{1}}])}.  
\end{align*}
The above isomorphism comes from 3rd isomorphism theorem and the last equality is well defined as $[F_{\bar{0}}, R_{\bar{0}}] + [F_{\bar{1}}, R_{\bar{1}}] \subset F_{\bar{0}}' + F_{\bar{1}}',\; [F_{\bar{0}}, R_{\bar{1}}] \subset [F_{\bar{0}}, F_{\bar{1}}]$ and 
$[F_{\bar{0}}, R_{\bar{0}}] + [F_{\bar{1}}, R_{\bar{1}}] \subset R_{\bar{0}} + R_{\bar{0}} \subset T_{\bar{0}},\; [F_{\bar{0}}, R_{\bar{1}}] \subset  R_{\bar{1}} \subset T_{\bar{1}}$. Set
\begin{align*}
G_{\bar{0}} = ((F_{\bar{0}}' + F_{\bar{1}}')\cap T_{\bar{0}})/([F_{\bar{0}}, R_{\bar{0}}] + [F_{\bar{1}}, R_{\bar{1}}]), &\; G_{\bar{1}} = ([F_{\bar{0}},F_{\bar{1}}]\cap T_{\bar{1}})/([F_{\bar{0}},R_{\bar{1}}]),\\
M_{\bar{0}} = (F_{\bar{0}}' + F_{\bar{1}}')\cap R_{\bar{0}})/([F_{\bar{0}}, R_{\bar{0}}] + [F_{\bar{1}}, R_{\bar{1}}]),& \; M_{\bar{1}} = ([F_{\bar{0}},F_{\bar{1}}]\cap R_{\bar{1}})/([F_{\bar{0}},R_{\bar{1}}]).
\end{align*}
Then $L' \cap K \cong \left(G/M \right)_{\bar{0}} \oplus \left(G/M\right)_{\bar{1}}$, which completes the proof of (i). Clearly from  definition, $\mathcal{M}(L) = M_{\bar{0}} \oplus M_{\bar{1}} = M$. Thus (ii) hold. Also, 
$$H_{\bar{0}} \oplus H_{\bar{1}} = (L/K)_{\bar{0}} \oplus (L/K)_{\bar{1}} = \frac{(F/R)_{\bar{0}}}{(T/R)_{\bar{0}}} \oplus \frac{(F/R)_{\bar{1}}}{(T/R)_{\bar{1}}} = (F/T)_{\bar{0}} \oplus (F/T)_{\bar{1}} = F/T.$$ 
Thus, by definition 
$$
\mathcal{M}(H) = \frac{F'\cap T}{[F, T]} =\frac{(F' \cap T)/[F,R]}{[F,T]/[F,R]} = \frac{G}{([F,T]/[F,R])}.$$ Therefore, $\mathcal{M}(H)$ is the image of $G$ under some homomorphism, whose kernel is $[F,T]/[F,R]$, which completes part (iii). From the proof of part (i), we have 
\[L'\cap K \cong \frac{(F_{\bar{0}}' + F_{\bar{1}}')\cap T_{\bar{0}}}{(F_{\bar{0}}' + F_{\bar{1}}')\cap R_{\bar{0}}} \oplus \frac{[F_{\bar{0}}, F_{\bar{1}}] \cap T_{\bar{1}}}{[F_{\bar{0}}, F_{\bar{1}}] \cap R_{\bar{1}}}.\] Since $K\subset Z(L), \left(\frac{T}{R}\right)_{\bar{0}}\subset (Z\left(\frac{F}{R}\right))_{\bar{0}} \subset Z\left(\frac{F}{R}\right)_{\bar{0}}$, which implies that $[T_{\bar{0}}, F_{\bar{0}}]\subset R_{\bar{0}}$. Similarly, we have $[T_{\bar{1}}, F_{\bar{1}}]\subset R_{\bar{1}}$. Therefore, 
\begin{align*}
L'\cap K &\cong \frac{(F_{\bar{0}}' + F_{\bar{1}}')\cap T_{\bar{0}}}{(F_{\bar{0}}' + F_{\bar{1}}')\cap R_{\bar{0}}} \oplus \frac{[F_{\bar{0}}, F_{\bar{1}}] \cap T_{\bar{1}}}{[F_{\bar{0}}, F_{\bar{1}}] \cap R_{\bar{1}}} \\
& = \frac{((F_{\bar{0}}' + F_{\bar{1}}')\cap T_{\bar{0}})/ [T_{\bar{0}}, F_{\bar{0}}]}{((F_{\bar{0}}' + F_{\bar{1}}')\cap R_{\bar{0}})/[T_{\bar{0}}, F_{\bar{0}}]} \oplus \frac{([F_{\bar{0}}, F_{\bar{1}}] \cap T_{\bar{1}})/[T_{\bar{1}}, F_{\bar{1}}]}{([F_{\bar{0}}, F_{\bar{1}}] \cap R_{\bar{1}})/[T_{\bar{1}}, F_{\bar{1}}]} =
 \frac{\mathcal{M}(H)}{(F'\cap R)/[T,F]}.
\end{align*}
This completes the proof of lemma.
\end{proof}

\begin{corollary}\label{cor10}
Let $L$ be a finite dimensional Lie superalgebra and $K$ be a graded ideal of $L$ and $H=L/K$. Then, 
$
\dim \mathcal{M}(H)\leq \dim \mathcal{M}(L) + \dim (K\cap L').$
\end{corollary}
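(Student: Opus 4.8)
The plan is to deduce the inequality directly from Lemma~\ref{lem9} applied to the pair $(L,K)$, which furnishes a finite dimensional Lie superalgebra $G$ together with a graded ideal $M$ enjoying the three properties (i)--(iii) listed there. No new construction is needed; what remains is a dimension count.

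First I would invoke part (iii): $\mathcal{M}(H)$ is a homomorphic image of $G$, so $\dim\mathcal{M}(H)\le\dim G$. Next, parts (i) and (ii) together say that the natural projection $G \twoheadrightarrow G/M$ is an epimorphism onto $L'\cap K$ with kernel $M\cong\mathcal{M}(L)$. Since $G$, $M$ and $L'\cap K$ are all finite dimensional (the first by Lemma~\ref{lem9}, and $\mathcal{M}(L)$ by Theorem~\ref{lem4a}, while $L'\cap K$ is a subspace of the finite dimensional $L$), dimension is additive along the short exact sequence $0\to M\to G\to L'\cap K\to 0$, giving $\dim G=\dim\mathcal{M}(L)+\dim(L'\cap K)$. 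Combining the two relations and using $L'\cap K=K\cap L'$ yields $\dim\mathcal{M}(H)\le\dim\mathcal{M}(L)+\dim(K\cap L')$, as claimed.

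Since the entire technical content has been front-loaded into Lemma~\ref{lem9}, I do not anticipate any genuine obstacle here; the only point deserving a word of care is the finite dimensionality of $G$, which is exactly what permits the dimension of the middle term of the short exact sequence to be split as a sum, and this is already part of the assertion of Lemma~\ref{lem9}.
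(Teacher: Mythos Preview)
Your argument is correct and coincides with the paper's own proof: both invoke Lemma~\ref{lem9}, use (i) and (ii) to obtain $\dim G=\dim\mathcal{M}(L)+\dim(K\cap L')$ via the short exact sequence $0\to M\to G\to L'\cap K\to 0$, and then use (iii) to bound $\dim\mathcal{M}(H)\le\dim G$. The paper presents this in two lines without spelling out the exact sequence, but the content is identical.
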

\begin{proof}
From Lemma \ref{lem9}, we have $\dim G = \dim (K\cap L') + \dim \mathcal{M}(L)$ and $\dim \mathcal{M}(H) \leq \dim G$. Thus, $\dim \mathcal{M}(H)\leq \dim (K\cap L') + \dim \mathcal{M}(L).$
\end{proof}
The following result is proved in \cite[Theorem 2.5.5(ii)]{Kar1987} for groups and for Lie algebra in \cite[Corollary 2.3]{NR2011}.

\begin{theorem}\label{th11}
Let $L$ be a finite dimensional Lie superalgebra, $K \subset Z(L)$ an graded ideal and $H = L/K$. Then
\begin{enumerate}
\item $\dim \mathcal{M}(L) + \dim (L'\cap K) \leq \dim \mathcal{M}(H) + \dim \mathcal{M}(K) + \dim (H/H'\otimes K/K');$
\item $ \dim \mathcal{M}(L) + \dim (L'\cap K)  \leq \frac{1}{2}[(m+n)^2 +(n-m)].$
\end{enumerate}

\end{theorem}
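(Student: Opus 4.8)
The plan is to derive (1) from Lemma \ref{lem9} and then to read off (2) as an arithmetic consequence of (1) together with Theorems \ref{lem4a} and \ref{lem8}.

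\emph{Part (1).} Fix a free presentation $0\to R\to F\to L\to 0$ and a graded ideal $T$ of $F$ with $R\subseteq T$ and $K=T/R$, so that $H=F/T$. Let $(G,M)$ be the pair produced by Lemma \ref{lem9}. Its parts (i) and (ii) give $\dim G=\dim\mathcal{M}(L)+\dim(L'\cap K)$, while part (iii) --- whose proof identifies the kernel of $G\twoheadrightarrow\mathcal{M}(H)$ with $[F,T]/[F,R]$ --- gives $\dim\mathcal{M}(H)=\dim G-\dim\bigl([F,T]/[F,R]\bigr)$. Hence
\[
\dim\mathcal{M}(L)+\dim(L'\cap K)=\dim\mathcal{M}(H)+\dim\bigl([F,T]/[F,R]\bigr),
\]
and it suffices to prove $\dim\bigl([F,T]/[F,R]\bigr)\le\dim\mathcal{M}(K)+\dim\bigl(H/H'\otimes K/K'\bigr)$. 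Since $K$ is central one has $[F,T]\subseteq R$ and $T'=[T,T]\subseteq R$, so $[F,R]\subseteq[F,R]+T'\subseteq[F,T]$, and I bound the two successive quotients. The lower quotient $\bigl([F,R]+T'\bigr)/[F,R]\cong T'/\bigl(T'\cap[F,R]\bigr)$ is a quotient of $T'/[T,R]$ (note $[R,R]\subseteq[T,R]\subseteq[F,R]\cap T'$); choosing a free presentation $S\to K$ and a lift $\varphi\colon S\to T$ of it, the graded Jacobi identity shows that every $[t_1,t_2]$ is, modulo $[T,R]$, the $\varphi$-image of a super-bracket of two elements of $\ker(S\to K)$, so $\varphi$ carries $\mathcal{M}(K)$ onto $T'/[T,R]$ and the lower quotient has dimension $\le\dim\mathcal{M}(K)$. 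For the upper quotient $[F,T]/\bigl([F,R]+T'\bigr)$, the super-bracket induces a degree-zero bilinear map $K/K'\times H/H'\to[F,T]/\bigl([F,R]+T'\bigr)$ sending $(t+R,\ f+F'+T)$ to the class of $[f,t]$; the only non-trivial point in checking this is well-definedness in the $F/(F'+T)$-variable, which amounts to the inclusion $[F',T]\subseteq[F,R]$ and follows from the graded Jacobi identity together with $[F,T]\subseteq R$. This map is onto, so the upper quotient is a homomorphic image of $H/H'\otimes K/K'$ and has dimension $\le\dim(H/H')\dim(K/K')$. Adding the two estimates proves (1).

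\emph{Part (2).} Write $\dim L=(m\mid n)$ and $\dim K=(a\mid b)$; then $\dim H=(m-a\mid n-b)$ and $K/K'=K$ because $K$ is abelian. Applying Theorem \ref{lem4a} (or Theorem \ref{lem8}) to $K$ and to $H$, and bounding $\dim\bigl(H/H'\otimes K/K'\bigr)\le\bigl((m-a)+(n-b)\bigr)(a+b)$, part (1) gives, with $p=m+n$ and $q=a+b$,
\[
\dim\mathcal{M}(L)+\dim(L'\cap K)\le\frac{1}{2}\bigl[(p-q)^2+\bigl((n-b)-(m-a)\bigr)\bigr]+\frac{1}{2}\bigl[q^2+(b-a)\bigr]+(p-q)q.
\]
Using $(n-b)-(m-a)=(n-m)-(b-a)$ the two ``odd-minus-even'' corrections add to $\frac{1}{2}(n-m)$, and $\frac{1}{2}(p-q)^2+\frac{1}{2}q^2+(p-q)q=\frac{1}{2}p^2$, so the right-hand side is exactly $\frac{1}{2}\bigl[(m+n)^2+(n-m)\bigr]$, which is (2).

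\emph{Main obstacle.} Passing from Lemma \ref{lem9} to the displayed identity, and the arithmetic collapse in (2), are both routine. The work is in the two ``image'' assertions inside (1): that $T'/[T,R]$ is a homomorphic image of $\mathcal{M}(K)$, and that $[F,T]/\bigl([F,R]+T'\bigr)$ is a homomorphic image of $H/H'\otimes K/K'$. Both come down to the single fact $[F',T]\subseteq[F,R]$ for a central $T/R$, and in the super setting this --- together with the verification that all the maps involved are homogeneous of degree $0$ --- must be carried through with the graded Jacobi identity and the correct Koszul signs; that bookkeeping is the only genuinely delicate point.
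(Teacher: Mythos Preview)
Your argument for part~(1) follows essentially the same line as the paper's: both start from Lemma~\ref{lem9} to obtain the identity
\[
\dim\mathcal{M}(L)+\dim(L'\cap K)=\dim\mathcal{M}(H)+\dim\bigl([F,T]/[F,R]\bigr),
\]
then filter $[F,T]/[F,R]$ by the intermediate term $[F,R]+T'$ and bound the two successive quotients by $\dim\mathcal{M}(K)$ and $\dim(H/H'\otimes K/K')$ respectively, via the same bilinear map $(\bar a,\bar b)\mapsto\overline{[a,b]}$. If anything you are more careful than the paper about the lower piece: the paper writes $T'/[T,R]\cong\mathcal{M}(K)$ outright, which is not literally justified since $T$ need not be free, whereas you correctly realise it only as a \emph{quotient} of $\mathcal{M}(K)$ by lifting a free presentation of $K$ into $T$. (A minor wording slip: in that step the bracket $[s_1,s_2]$ is a bracket of two elements of $S$ which then lies in $\ker(S\to K)$ because $K$ is abelian, not a bracket of two elements of the kernel.)

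For part~(2) your route is genuinely different. The paper does \emph{not} deduce (2) from (1); instead it observes directly that
\[
\dim\mathcal{M}(L)+\dim(L'\cap K)=\dim\Bigl(\frac{F'\cap T}{[F,R]}\Bigr)\le\dim\Bigl(\frac{F}{[F,R]}\Bigr)'
\]
and applies Theorem~\ref{lem4} to $F/[F,R]$, using $R/[F,R]\subseteq Z(F/[F,R])$ so that the central quotient has superdimension at most $(m\mid n)$. Your approach instead plugs the bounds of Theorems~\ref{lem4a} and~\ref{lem8} into (1) and collapses the arithmetic via $(p-q)^2+q^2+2(p-q)q=p^2$. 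Both are correct; the paper's method is shorter and avoids tracking the separate superdimensions of $H$ and $K$, while yours has the merit of making (2) an honest corollary of (1) rather than an independent computation.
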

\begin{proof}
Consider a free presentation of $L: 0\longrightarrow R \longrightarrow F \longrightarrow L \longrightarrow 0$.  $K$ is a central ideal of $L$. Suppose $K =T/R$ for some graded ideal $T$ of $F$, then $H= L/K \cong F/T$. Now by proof of Lemma \ref{lem9}, 
\begin{equation}\label{eq11a}
\dim \mathcal{M}(L) + \dim L'\cap K = \dim G = \dim \mathcal{M}(H) + \dim ([F,T]/[F,R]).
\end{equation}
Since $
\frac{[F,T]/[F,R]}{([F,R]+T')/[F,R]}\cong [F,T]/([F,R]+T')$, we have
\begin{align*}
\dim \mathcal{M}(L) + \dim (L'\cap K) =  \dim \mathcal{M}(H) + \dim (([F,R]+T')/[F,R]) + \dim ([F,T]/([F,R]+T')).
\end{align*}
Also, $([F,R]+T')/[F,R] \cong T'/([F,R]\cap T')\cong \frac{T'/[T,R]}{([F,R]\cap T')/[T,R]}$. Since $T'\subset [F,T]\subset R$ we have $T'/[T,R] = (T' \cap R)/[T,R] \cong \mathcal{M}(K)$. Therefore,
\begin{align*}
\dim \mathcal{M}(L) + \dim (L'\cap K) =&  \dim \mathcal{M}(H) + \dim \mathcal{M}(K) - \dim (([F,R]\cap T')/[T,R])\\ & + \dim ([F,T]/([F,R]+T'))\\
& \leq \dim \mathcal{M}(H) + \dim \mathcal{M}(K) + \dim ([F,T]/([F,R]+T')).
\end{align*}
We define a map $f: \frac{F}{F'+ T}\times \frac{T}{R}\longrightarrow \frac{[F,T]}{[F,R]+ T'}$ and given by $f(\bar{a}, \bar{b}) = \overline{[a,b]}$. It is easy to see that $f$ is an epimorphism. Also, $\frac{F}{F'+ T}\times \frac{T}{R} \cong H/H' \times K/K'$. Thus,
$$
\dim \mathcal{M}(L) + \dim (L'\cap K) \leq \dim \mathcal{M}(H) + \dim \mathcal{M}(K) + \dim (H/H' \times K/K'),$$
and this completes the proof of part (1). Since $H\cong F/T$, we have $\mathcal{M}(H) = \frac{F'\cap T}{[F, T]} \cong \frac{(F'\cap T)/[F,R]}{[F,T]/[F,R]}$. Now from \eqref{eq11a}, we can conclude that 
$$
\dim \mathcal{M}(L) + \dim (L'\cap K)  \leq \dim \left(\frac{F'\cap T}{[R,F]}\right)\leq \dim \left(\frac{F}{[R,F]}\right)'.$$ 
Using Theorem \ref{lem4} and  
$$
\dim \left(\frac{F/[R,F]}{Z(F/[R,F])}\right) \leq  \dim \left(\frac{F/[R,F]}{R/[R,F]}\right) = \dim (F/R) = (m \mid n),$$ we infer that $\dim (F/[R,F])' \leq \frac{1}{2}[(m+n)^2 +(n-m)]$. This completes the proof of part (2).
\end{proof}
\begin{corollary}\label{cor11b}
Let $L$ be a Lie superalgebra of dimension $(m\mid n)$. Then 
\begin{equation*}
\dim \mathcal{M}(L) + \dim L' \leq \frac{1}{2}[(m+n)^2 +(n-m)]. 
\end{equation*}
\end{corollary}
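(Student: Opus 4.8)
The plan is to obtain this as a sharpening of Theorem \ref{lem4a}: rather than only bounding $\dim\mathcal{M}(L)$ by the dimension of the derived algebra of a cover, I would evaluate that dimension precisely and read off both $\dim\mathcal{M}(L)$ and $\dim L'$ from it, then feed in Theorem \ref{lem4}.

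First I would fix a maximal defining pair (stem cover) $(K,M)$ for $L$, which exists by Corollary \ref{lem5} together with the remark following it; thus $L\cong K/M$, $M\cong\mathcal{M}(L)$ and, crucially, $M\subseteq Z(K)\cap K'$. Next I would identify $\dim K'$ exactly: since $M$ is a graded ideal one has $(K/M)'=(K'+M)/M$, and because $M\subseteq K'$ this equals $K'/M$; on the other hand $(K/M)'\cong L'$. Hence $K'/M\cong L'$, so
\[
\dim K' = \dim M+\dim L' = \dim\mathcal{M}(L)+\dim L'.
\]

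Then I would bound $\dim K'$ from above exactly as in the proof of Theorem \ref{lem4a}: because $M\subseteq Z(K)$ there is a degree-zero epimorphism $K/M\twoheadrightarrow K/Z(K)$, so $\dim\left(K/Z(K)\right)=(m'\mid n')$ with $m'\le m$ and $n'\le n$, and Theorem \ref{lem4} gives $\dim K'\le\frac{1}{2}\left[(m'+n')^{2}+(n'-m')\right]$. It then remains to observe that the integer-valued function $(a,b)\mapsto(a+b)^{2}+(b-a)$ is non-decreasing in each variable over the non-negative integers (the increments are $2(a+b)\ge 0$ and $2(a+b)+2>0$ respectively), whence $\frac{1}{2}\left[(m'+n')^{2}+(n'-m')\right]\le\frac{1}{2}\left[(m+n)^{2}+(n-m)\right]$. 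Combining this with the displayed equality gives the assertion.

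The only step that requires any care is the identification $\dim K'=\dim\mathcal{M}(L)+\dim L'$, i.e. the simultaneous use of $M\subseteq K'$ and $K'/M\cong L'$; this is precisely where the \emph{stem} hypothesis $M\subseteq K'$ (and not merely centrality of $M$) is used, and it is what upgrades the bound $\dim\mathcal{M}(L)=\dim M\le\dim K'$ of Theorem \ref{lem4a} to the stronger additive statement of the corollary. An essentially equivalent alternative is to argue straight from a free presentation $0\to R\to F\to L\to 0$: one has $F'/[F,R]\cong(F/[R,F])'$, this space contains $(F'\cap R)/[F,R]=\mathcal{M}(L)$ with quotient $F'/(F'\cap R)\cong L'$, and $\dim(F/[R,F])'\le\frac{1}{2}\left[(m+n)^{2}+(n-m)\right]$ by Theorem \ref{lem4} since $R/[R,F]$ is central in $F/[R,F]$ — this is the computation already carried out inside the proof of Theorem \ref{th11}(2), now applied with the ideal taken to be all of $L$.
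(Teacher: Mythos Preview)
Your proof is correct. The paper's one-line argument simply sets $K=L'$ in Theorem~\ref{th11}(2), which (via the free-presentation machinery of Lemma~\ref{lem9}) is essentially the computation you sketch as your ``equivalent alternative'' at the end. Your primary route, however, is different and more self-contained: you sharpen the stem-cover argument of Theorem~\ref{lem4a} by observing that for a stem cover $(K,M)$ one has not merely $\dim M\le\dim K'$ but the exact equality $\dim K'=\dim M+\dim L'$ (this being precisely where the stem condition $M\subseteq K'$ enters), and then invoke Theorem~\ref{lem4} directly. This bypasses Lemma~\ref{lem9} and Theorem~\ref{th11} entirely, and it also sidesteps a minor awkwardness in the paper's deduction: Theorem~\ref{th11} is stated under the hypothesis $K\subseteq Z(L)$, which $K=L'$ need not satisfy for an arbitrary $L$, although inspection shows the proof of part~(2) never uses centrality. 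The monotonicity check you include for $(a,b)\mapsto(a+b)^{2}+(b-a)$ is necessary and correct.
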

\begin{proof}
The result follows by putting $K = L'$ in part (2) of Theorem \ref{th11}. 
\end{proof}

The Schur multiplier of the direct product of two finite groups is equal to the direct product of the Schur multiplier of the two factors plus the tensor product of the abelianization of two groups \cite[Theorem2.2.10]{Kar1987}. In \cite{BMS1996}, Batten et al., proved the same result for Lie algberas. Further using K$\ddot{\mathrm{u}}$nneth formula, Niroomand et al., in \cite{NR2011} gave a short proof of this result. Following the idea of \cite{BMS1996} we prove the same for Lie superalgebra.
\begin{theorem}\label{th12}
Let $A$ and $B$ finite dimensional Lie superalgebras. Then 
\begin{equation*}
\dim \mathcal{M}(A\oplus B) = \dim \mathcal{M}(A) + \dim \mathcal{M}(B) + \dim (A/A'\otimes B/B'). 
\end{equation*}
\end{theorem}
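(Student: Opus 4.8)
The cleanest route is to mimic the classical Lie-algebra argument of Batten--Moneyhun--Stitzinger, working with free presentations and the structure already developed in Lemma~\ref{lem9} and Theorem~\ref{th11}. Fix free presentations $0\to R_1\to F_1\to A\to 0$ and $0\to R_2\to F_2\to B\to 0$ with $X_1,X_2$ the corresponding $\mathbb{Z}_2$-graded generating sets. Let $F$ be the free Lie superalgebra on $X_1\cup X_2$ (disjoint union), and let $R=\ker(F\to A\oplus B)$. Writing $I_1$ (resp. $I_2$) for the ideal of $F$ generated by $X_2$ (resp. $X_1$), one has $F/I_1\cong F_1$, $F/I_2\cong F_2$, and $F=F_1+F_2+[F_1,F_2]$ as superspaces after identifying $F_1,F_2$ with the subalgebras generated by $X_1,X_2$ respectively. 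The key observation is that $R=R_1+R_2+[F_1,F_2]$ and $[F,F]=[F_1,F_1]+[F_2,F_2]+[F_1,F_2]$, so $[F,F]\cap R=(R_1\cap[F_1,F_1])+(R_2\cap[F_2,F_2])+[F_1,F_2]$, and modding out by $[F,R]=[F_1,R_1]+[F_2,R_2]+[F_1,[F_1,F_2]]+[F_2,[F_1,F_2]]+\cdots$ gives a direct sum decomposition of $\mathcal{M}(A\oplus B)$ whose first two summands are $\mathcal{M}(A)$ and $\mathcal{M}(B)$ and whose third is $[F_1,F_2]/([F_1,[F_1,F_2]]+[F_2,[F_1,F_2]])$.

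The second step is to identify that third summand with $A/A'\otimes B/B'$ (as superspaces, with the $\mathbb{Z}_2$-grading on the tensor product given by $|a\otimes b|=|a|+|b|$). The bilinear map $F_1\times F_2\to [F_1,F_2]/(\cdots)$, $(a,b)\mapsto\overline{[a,b]}$, is surjective; graded skew-symmetry plus the graded Jacobi identity show it kills $[F_1,F_1]\times F_2$ and $F_1\times[F_2,F_2]$ as well as $R_1\times F_2$ and $F_1\times R_2$ (since $[R_1,F_2]\subseteq[F,R]$, etc.), so it factors through a surjection $A/A'\otimes B/B'\twoheadrightarrow [F_1,F_2]/(\cdots)$. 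This is exactly the map $f$ that already appears, in quotient form, in the proof of Theorem~\ref{th11}(1); here the point is that on a \emph{free} object it is injective — one builds an inverse by sending a basis tensor $\bar u_i\otimes\bar v_j$ to $\overline{[u_i,v_j]}$ and checking well-definedness against the defining relations of the tensor product using freeness of $F$. Counting dimensions then yields the claimed equality.

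Alternatively — and this may be the shorter write-up — one can invoke the homological interpretation $\mathcal{M}(L)\cong H_2(L)$ from the Lemma after the free-presentation definition, together with a K\"unneth formula for Lie superalgebra homology with trivial coefficients: $H_2(A\oplus B)\cong H_2(A)\oplus H_2(B)\oplus (H_1(A)\otimes H_1(B))$, using $H_1(L)\cong L/L'$ and $H_0=\mathbb{F}$. The graded signs in the super K\"unneth formula are harmless here because all the relevant homology sits in homological degrees $0,1,2$ and the $\mathrm{Tor}$ terms vanish over the field $\mathbb{F}$.

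\textbf{Main obstacle.} The delicate point in the free-presentation approach is the \emph{injectivity} of $A/A'\otimes B/B'\to[F_1,F_2]/(\cdots)$, i.e. showing no further collapse occurs beyond the relations coming from $A',B',R_1,R_2$. This is where freeness of $F$ on $X_1\sqcup X_2$ is essential (via the universal property, to construct the candidate inverse map consistently), and where one must be careful that the super sign conventions in skew-symmetry and the Jacobi identity produce exactly the relations of the graded tensor product and nothing stronger. If instead one takes the K\"unneth route, the only thing to verify is that the cited super-K\"unneth formula applies in this generality (finite-dimensional, trivial coefficients, field of characteristic zero), which is standard.
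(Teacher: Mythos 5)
Your main (free-presentation) route has a genuine gap, and you have located it yourself: the injectivity of $A/A'\otimes B/B'\to[F_1,F_2]/(\cdots)$, equivalently the lower bound $\dim\mathcal{M}(A\oplus B)\ge\dim\mathcal{M}(A)+\dim\mathcal{M}(B)+\dim(A/A'\otimes B/B')$, is asserted but not proved; ``send a basis tensor to $\overline{[u_i,v_j]}$ and check well-definedness using freeness'' is not an argument, because well-definedness modulo $[F,R]$ is exactly what is at stake (the same issue affects the claims that your three summands of $([F,F]\cap R)/[F,R]$ remain independent and that the first two are precisely $\mathcal{M}(A)$ and $\mathcal{M}(B)$). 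The standard way to close this is to exhibit a central extension of $A\oplus B$ in which a copy of $A/A'\otimes B/B'$ survives inside $Z(\cdot)\cap(\cdot)'$; that is exactly the second half of the paper's proof, which takes maximal defining pairs $(U,L)$, $(V,M)$ for $A$, $B$, complements $P$, $Q$ of $U'$, $V'$, and builds $S=U\oplus V\oplus W$ with $\dim W=\dim(U/U')\dim(V/V')$, $[P,Q]=W$, $[U',V]=[V',U]=0$, $W\subseteq Z(S)$, checking that $(S,\,L+M+W)$ is a defining pair for $A\oplus B$. Without such a construction your route only yields the upper bound, which is the content of the paper's first half (its epimorphism from $A/A'\times B/B'$ onto $[X,Y]$). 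A smaller but real slip: the superspace identity $F=F_1+F_2+[F_1,F_2]$ is false in the free Lie superalgebra on $X_1\sqcup X_2$, since the kernel of $F\to F_1\oplus F_2$ is the \emph{ideal} generated by $[F_1,F_2]$ and contains iterated brackets outside that linear span; it becomes true only modulo $[F,R]$ (because that ideal lies in $R$), and this needs to be said and used.

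Your alternative K\"unneth route, by contrast, is correct and genuinely different from the paper's argument. The paper follows \cite{BMS1996} and works throughout with maximal defining pairs, whereas you would combine $\mathcal{M}(L)\cong H_2(L)$ (quoted in the paper from \cite{GKL2015}) with the K\"unneth theorem for the Chevalley--Eilenberg complex of a direct sum over a field, giving $H_2(A\oplus B)\cong H_2(A)\oplus H_2(B)\oplus\bigl(H_1(A)\otimes H_1(B)\bigr)$ together with $H_1(L)\cong L/L'$. This is the superalgebra analogue of the short proof in \cite{NR2011} that the paper itself mentions but deliberately does not follow. It is shorter, but you should verify or cite the super K\"unneth statement rather than call it standard; the paper's longer, elementary construction has the side benefit of producing an explicit cover of $A\oplus B$.
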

\begin{proof}
Consider a short exact sequence of $A\oplus B$ i.e.,
$$
0 \rightarrow N \rightarrow H \rightarrow A\oplus B \rightarrow 0 $$
where $A\oplus B \cong H/N$ and $N \subset Z(H)\cap H'$. Let the Lie superalgebra pair $(H, N)$ be the maximal defining pair for $A\oplus B$. This implies $\mathcal{M}(A\oplus B) = N$. Since $A = A_{\bar{0}} \oplus A_{\bar{1}}$ and $B = B_{\bar{0}} \oplus B_{\bar{1}}$ are Lie superalgebras so is $A\oplus B$ with even part $(A\oplus B)_{\bar{0}} = A_{\bar{0}} + B_{\bar{0}}$ and odd part $(A\oplus B)_{\bar{1}} = A_{\bar{1}} + B_{\bar{1}}$. There exist graded ideals $X= X_{\bar{0}} + X_{\bar{1}}, Y= Y_{\bar{0}} + Y_{\bar{1}}$ of $H = H_{\bar{0}} + H_{\bar{1}}$ such that 
$$
\frac{ X_{\bar{0}} \oplus X_{\bar{1}}}{N_{\bar{0}} \oplus N_{\bar{1}}} = \frac{X_{\bar{0}}}{N_{\bar{0}}} \oplus \frac{X_{\bar{1}}}{N_{\bar{1}}} \cong A_{\bar{0}} \oplus A_{\bar{1}}\;\;\mathrm{and}\;\; \frac{ Y_{\bar{0}} \oplus Y_{\bar{1}}}{N_{\bar{0}} \oplus N_{\bar{1}}}  \cong B_{\bar{0}} \oplus B_{\bar{1}}.$$
Now, 
$$\left(\frac{X_{\bar{0}}}{N_{\bar{0}}} + \frac{Y_{\bar{0}}}{N_{\bar{0}}}\right) \oplus \left(\frac{X_{\bar{1}}}{N_{\bar{1}}} + \frac{Y_{\bar{1}}}{N_{\bar{1}}}\right) \cong( A_{\bar{0}} + B_{\bar{0}}) \oplus (A_{\bar{1}} + B_{\bar{1}}) \cong \frac{H_{\bar{0}}}{N_{\bar{0}}} \oplus \frac{H_{\bar{1}}}{N_{\bar{1}}},$$
which implies that $(X_{\bar{0}}+ Y_{\bar{0}}) \oplus (X_{\bar{1}} + Y_{\bar{1}}) = (H_{\bar{0}} + H_{\bar{1}}).$ 
\begin{align*}
H' & = [(X_{\bar{0}}+ Y_{\bar{0}}) \oplus (X_{\bar{1}} + Y_{\bar{1}}), (X_{\bar{0}}+ Y_{\bar{0}}) \oplus (X_{\bar{1}} + Y_{\bar{1}})]\\
& = (X_{\bar{0}} +Y_{\bar{0}})' + (X_{\bar{1}} +Y_{\bar{1}})' \oplus [(X_{\bar{0}} +Y_{\bar{0}}), (X_{\bar{1}} +Y_{\bar{1}})]\\
& = (X_{\bar{0}}'+ X_{\bar{1}}' \oplus [X_{\bar{0}}, X_{\bar{1}}]) + (Y_{\bar{0}}'+ Y_{\bar{1}}' \oplus [Y_{\bar{0}}, Y_{\bar{1}}]) + [X_{\bar{0}}\oplus X_{\bar{1}}, Y_{\bar{0}} \oplus Y_{\bar{1}}]
\end{align*} 
with $ [X_{\bar{0}}\oplus X_{\bar{1}}, Y_{\bar{0}} \oplus Y_{\bar{1}}] \subset (X_{\bar{0}}\cap Y_{\bar{0}}) \oplus (X_{\bar{1}}\cap Y_{\bar{1}})$. 
Now consider the map
\begin{align*}
(X_{\bar{0}} + Y_{\bar{0}}) \oplus (X_{\bar{1}} + Y_{\bar{1}}) & \longrightarrow (H/N)_{\bar{0}} \oplus (H/N)_{1}\\
(x_{\bar{0}}, y_{\bar{0}}; x_{\bar{1}}, y_{\bar{1}}) &\mapsto (x_{\bar{0}} + y_{\bar{0}}) + N_{\bar{0}}\oplus (x_{\bar{1}}+y_{\bar{1}}) + N_{\bar{1}}. 
\end{align*}
Kernel of the above map is $N_{\bar{0}} \oplus N_{\bar{1}} = (X_{\bar{0}}\cap Y_{\bar{0}}) \oplus (X_{\bar{1}}\cap Y_{\bar{1}})$. Since $N\subset H'$ we have 
$$N = H' \cap N =( (X_{\bar{0}}'+ X_{\bar{1}}')\cap N_{\bar{0}}) \oplus ([X_{\bar{0}}, X_{\bar{1}}] \cap N_{\bar{1}})+ ((Y_{\bar{0}}'+ Y_{\bar{1}}')\cap N_{\bar{0}}) \oplus ([Y_{\bar{0}}, Y_{\bar{1}}]\cap N_{\bar{1}}) \oplus [(X_{\bar{0}} \oplus X_{\bar{1}}), (Y_{\bar{0}} \oplus Y_{\bar{1}})].$$ 

As $N \subset Z(H)$ and $N\subset (X_{\bar{0}}\cap Y_{\bar{0}}) \oplus (X_{\bar{1}}\cap Y_{\bar{1}}), N_{\bar{0}}\subset Z(X_{\bar{0}})\cap Z(Y_{\bar{0}})$ and $N_{\bar{1}}\subset  Z(X_{\bar{1}})\cap Z(Y_{\bar{1}})$, using Lemma \ref{lem9}, we have
$
\dim ((X_{\bar{0}} +X_{\bar{1}})'\cap N ) \leq \dim \mathcal{M}(A_{\bar{0}}\oplus A_{\bar{1}})$ and $\dim ((Y_{\bar{0}} + Y_{\bar{1}})'\cap N ) \leq \dim \mathcal{M}(B_{\bar{0}}\oplus B_{\bar{1}})$. Therefore, 
$$\dim \mathcal{M}(A\oplus B) = \dim N \leq \dim \mathcal{M}(A) + \dim \mathcal{M}(B) + \dim [X,Y].$$

 Now consider the following linear transformation 
 $$f:\frac{A}{A'}\times \frac{B}{B'}\longrightarrow [X, Y]$$  and defined as $f(\bar{a}, \bar{b})=[x_{a},y_{b}]$, where $x_{a} + N \longrightarrow a$ and $x_{b} + N \longrightarrow b$ comes the isomorphism $X/N \cong A$ and $Y/N \cong B$.
  It can be checked that $f$ is well defined bilinear mapping onto $[X, Y]$. Hence $f$ exists and we have $\dim([X ,Y]) \leq \dim (A/A'\otimes B/B')$. So
$$\dim \mathcal{M}(A\oplus B)  \leq \dim \mathcal{M}(A) + \dim \mathcal{M}(B) + \dim (A/A' \otimes B/B').$$

\smallskip
 Let the Lie superalgebra pairs $(U, L)$ and $(V, L)$ be maximal defining pairs for $A$ and $B$ respectively. Let $P$ be a vector superspace complement to $U^{'}$ in $U$ and $Q$ be the vector superspace complement to $V^{'}$ in $V$. Let $\{u_{i}, u_{m+i}\}$ and  $\{v_{j}, v_{n+j}\}$ be a bases for $P$ and  $Q$ respectively. Let $W=W_{\bar{0}}\oplus W_{\bar{1}}$ be a vector superspace  with $[u_{i}, v_{j}], [u_{m+i}, v_{n+j}]$ and $[u_{m+i}, v_{j}], [u_{i}, v_{n+j}]$ are the basis for $W_{\bar{0}}$ and $W_{\bar{1}}$ respectively. Then 
 \[\dim W= \dim (U/U')\dim(V/V').\]
Let $S= U \oplus V \oplus W$ with $S_{0}= U_{\bar{0}} \oplus V_{\bar{0}} \oplus W_{\bar{0}}$ and $S_{1}= U_{\bar{1}}\oplus V_{\bar{1}}\oplus W_{\bar{1}}$. Here we define a multiplication on $S$ that extends the multiplication on $U$ and $V$ such that $[U^{'}, V]=[V^{'}, U]=0$ and $W \subset Z(S)$. Hence we have $[W, U]=0=[W,V]$ and also $[W, W] \subset [W, S]=0$. From construction $[P, Q]=W$ and
\begin{align*}
[U, V] &=[U_{\bar{0}}\oplus U_{\bar{1}}, V_{\bar{0}}\oplus V_{\bar{1}}] = [(P_{\bar{0}}+U_{\bar{0}}'+U_{\bar{1}}')\oplus (P_{\bar{1}}+[U_{\bar{0}},U_{\bar{1}}]), (Q_{\bar{0}}+V_{\bar{0}}' + V_{\bar{0}}') \oplus (Q_{\bar{1}}+[V_{\bar{0}}, V_{\bar{1}})]] \\
&= [U_{\bar{0}}'+U_{\bar{1}}'+[U_{\bar{0}}, U_{\bar{1}}], V_{\bar{0}}]+[[U_{\bar{0}},U_{\bar{1}}]+U_{\bar{0}}'+U_{\bar{1}}', V_{\bar{1}}]+[P_{\bar{0}}+P_{\bar{1}}, [V_{\bar{0}}, V_{\bar{1}}]+V_{\bar{0}}'+V_{\bar{1}}^{'}] \\
& +([P_{\bar{0}},Q_{\bar{0}}]+[P_{\bar{1}}, Q_{1}])\oplus ([P_{\bar{0}}, Q_{1}]+[P_{\bar{1}}, Q_{\bar{0}}])\\
&=[U', V]+[U-U', V']+W =  W.
\end{align*}
 The last equality follows as $[U', V']\subset [U', V]=0$.
So multiplication in $S=U\oplus V \oplus W$ defined as, $[U', V]=[V', U]= [U, W]=[V, W] = 0$ and $[U,V]=W$, makes it a Lie superalgebra. Let $T= L+ M+ W$ where $T_{\bar{0}}= L_{\bar{0}}\oplus M_{\bar{0}}\oplus W_{\bar{0}}$ and $T_{\bar{1}}= L_{\bar{1}}\oplus M_{\bar{1}} \oplus W_{\bar{1}}$ . Also we have $L \subset U' \cap Z(U)$ and $M \subset V'\cap Z(V)$. Now
\begin{align*}
S' &=[(U_{\bar{0}}+ V_{\bar{0}}+ W_{\bar{0}}) \oplus (U_{\bar{1}}+V_{\bar{1}}+W_{\bar{1}}), (U_{\bar{0}}+ V_{\bar{0}}+ W_{\bar{0}})\oplus (U_{\bar{1}}+V_{\bar{1}}+W_{\bar{1}})]\\
&=U_{\bar{0}}'+V_{\bar{0}}'+[U_{\bar{0}}, V_{\bar{0}}]+[V_{\bar{0}}, U_{\bar{0}}]+U_{\bar{1}}'+[U_{\bar{1}}, V_{\bar{1}}]+[V_{\bar{1}}, U_{\bar{1}}]+V_{\bar{1}}^{'}\oplus ([U_{\bar{0}}, U_{\bar{1}}]\\ 
&+ [U_{\bar{0}}, V_{\bar{1}}]+[V_{\bar{0}}, U_{\bar{0}}]+[V_{\bar{0}}, V_{\bar{1}}]+[U_{\bar{1}}, U_{\bar{0}}]+[U_{\bar{1}}, V_{\bar{0}}]+[V_{\bar{1}}, U_{\bar{0}}]+[V_{\bar{1}}, V_{\bar{0}}])\\
&=U_{\bar{0}}'+U_{\bar{1}}'+[U_{\bar{0}}, U_{\bar{1}}]+[U_{\bar{1}}, U_{\bar{0}}]+(V_{\bar{0}}'+V_{\bar{1}}'+[V_{\bar{0}}, V_{\bar{1}}]+[V_{\bar{1}}, V_{\bar{0}}])+W\\
&= U'+V'+W.
\end{align*} 
Further, $L \subset U^{'}\cap Z(U), M \subset V^{'}\cap Z(V)$ and $S^{'}= U^{'}+V^{'}+W$ which implies $T\subset S'$ and $T\subset Z(S)$. Now consider $S/T =  (U + V + W)/(L + M + W)$ where
\[\left(S/T\right)_{\bar{0}} =(U_{\bar{0}}+V_{\bar{0}}+W_{\bar{0}})/(L_{\bar{0}}+M_{\bar{0}}+W_{\bar{0}} )= U_{\bar{0}}/L_{\bar{0}}+ V_{\bar{0}}/M_{\bar{0}} \cong(A_{\bar{0}}+B_{\bar{0}}),\]
 and 
\[\left(S/T\right)_{\bar{1}} =(U_{\bar{1}}+V_{\bar{1}}+W_{\bar{1}})/(L_{\bar{1}}+M_{\bar{1}}+W_{\bar{1}} )= U_{\bar{1}}/L_{\bar{1}}+ V_{\bar{1}}/M_{\bar{1}} \cong(A_{\bar{1}}+B_{\bar{1}}). \]
Thus, all together we have $\left(S/T\right)_{\bar{0}}\oplus \left(S/T\right)_{\bar{1}}\cong (A_{\bar{0}}+B_{\bar{0}})\oplus(A_{\bar{1}}+B_{\bar{1}})$, and hence $(S=S_{\bar{0}}\oplus S_{\bar{1}}, T=T_{\bar{0}}\oplus T_{\bar{1}})$ is a maximal defining pair for $(A_{\bar{0}}+B_{\bar{0}})\oplus(A_{\bar{1}}+B_{\bar{1}})$. 
Therefore,
 \begin{align*}
& \dim \mathcal{M}(A\oplus B) = \dim(T) =
\dim(T_{\bar{0}}\oplus T_{\bar{1}}) = \dim (L_{\bar{0}}\oplus L_{\bar{1}})+ \dim (M_{\bar{0}}\oplus M_{\bar{1}})+ \dim (W_{\bar{0}}\oplus W_{\bar{1}})\\
& = \dim \mathcal{M}(A_{\bar{0}}\oplus A_{\bar{1}})+\dim \mathcal{M}(B_{\bar{0}}\oplus B_{\bar{1}})+ \dim \left(\frac{U_{\bar{0}}}{U_{\bar{0}}^{'}+U_{\bar{1}}^{'}} \cdot\frac{V_{\bar{0}}}{V_{\bar{0}}^{'} + V_{\bar{1}}^{'}}\right)+\dim\left(\frac{U_{\bar{1}}}{[U_{\bar{0}}, U_{\bar{1}}]}\cdot \frac{V_{\bar{1}}}{[V_{\bar{0}}, V_{\bar{1}}]}\right) \\
&= \dim \mathcal{M}(A)+\dim \mathcal{M}(B)+\dim \left(\frac{A_{\bar{0}}}{A_{\bar{0}}^{'}}\cdot \frac{B_{\bar{0}}}{B_{\bar{0}}^{'}}\right)+\dim \left(\frac{A_{\bar{1}}}{A_{\bar{1}}^{'}}\cdot\frac{B_{\bar{1}}}{B_{\bar{1}}^{'}}\right)\\
&= \dim \mathcal{M}(A)+\dim \mathcal{M}(B)+ \dim (A/A'\otimes B/B').
\end{align*}
\end{proof}

\section{Special Heisenberg Lie superalgebras}\label{shl}
Heisenberg Lie superalgebras play an important role in Physics.  A Heisenberg Lie superalgebra is by definition a two step nilpotent Lie superalgebra with $1$ dimensional center. We say that a Lie superalgebra $L$ is a Heisenberg Lie superalgebra if it has a 1-dimensional homogeneous center $Z(L)$ such that $[L,L] \subseteq Z(L)$ \cite{RSS2011}. Over a algebraically closed field, all finite dimensional Heisenberg Lie superalgebra split precisely into two types, i.e., one is with even center and another is with odd center. In this paper, we focus only on special Heisenberg Lie superalgebra with even center. 
\begin{definition}\label{def13}
The Lie superalgebra $L=L_{\bar{0}}\oplus L_{\bar{1}}$ is called special Heisenberg if $[L,L]=L'=Z(L)$ and $\dim L'=1$.
\end{definition}

In the following theorem, we prove every special Heisenberg Lie superalgebras have dimension $2m+n+1$ for some non-negative integers $m$ and $n$. 

\begin{theorem}\label{th14}
Every special Heisenberg Lie superalgebra with even center have dimension $(2m+1\mid n)$, is isomorphic to $H(m,n) = H_{\bar{0}}\oplus H_{\bar{1}}$ where 
\begin{equation}
H_{\bar{0}}=<  x_{1},\ldots,x_{m},x_{m+1},\ldots,x_{2m};\, z\mid [x_{i},x_{m+i}]=z, i=1,\ldots,m]> 
\end{equation}
and
\begin{equation}
H_{\bar{1}}=<y_{1},\ldots,y_{n}\mid [y_{j},y_{j}]=z, j=1,\ldots,n>.
\end{equation}
\end{theorem}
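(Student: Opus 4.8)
The plan is to translate the classification into the theory of non-degenerate bilinear forms on the superspace $V = L/Z(L)$. Two structural remarks come first. Since the center is even and $\dim L' = 1$, write $Z(L) = L' = \mathbb{F}z$ with $|z| = \bar 0$, so $Z(L) \subseteq L_{\bar 0}$. Because $[L,L] = \mathbb{F}z$ is purely even while $[L_{\bar 0}, L_{\bar 1}] \subseteq L_{\bar 1}$ by the grading, the mixed bracket must vanish: $[L_{\bar 0}, L_{\bar 1}] \subseteq L_{\bar 1} \cap \mathbb{F}z = 0$. (As a side check, $H(m,n)$ is indeed a Lie superalgebra, being two-step nilpotent with $[H,H] \subseteq Z(H)$, so the graded Jacobi identity is automatic.)

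Next, define $B \colon V \times V \to \mathbb{F}$ by $[a,b] = B(\bar a, \bar b)\, z$ for $a,b \in L$; this is well defined and bilinear because $Z(L) = L'$ is central and the bracket is bilinear. Writing $V_{\bar 0} = L_{\bar 0}/Z(L)$ and $V_{\bar 1} = L_{\bar 1}$, graded skew-symmetry together with the vanishing of the mixed bracket shows that $B$ restricts to an alternating form on $V_{\bar 0}$ (here $\mathrm{char}\,\mathbb{F} = 0$ upgrades skew-symmetric to alternating), to a symmetric form on $V_{\bar 1}$, and to $0$ on $V_{\bar 0} \times V_{\bar 1}$. The crucial step is non-degeneracy: if $\bar a$ lies in the radical of $B$ then $[a,L] = 0$, so $a \in Z(L)$ and $\bar a = 0$; and since $B(V_{\bar 0}, V_{\bar 1}) = 0$, this forces $B|_{V_{\bar 0}}$ and $B|_{V_{\bar 1}}$ to be non-degenerate separately.

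Now invoke the standard normal forms. A non-degenerate alternating form exists only on an even-dimensional space, so $\dim V_{\bar 0} = 2m$ and there is a symplectic basis $x_1,\dots,x_m,x_{m+1},\dots,x_{2m}$ of $V_{\bar 0}$ with $B(x_i,x_{m+i}) = 1$ and all other pairings zero. Over the algebraically closed field $\mathbb{F}$ of characteristic zero, every non-degenerate symmetric bilinear form is equivalent to the identity form, so $V_{\bar 1}$ has a basis $y_1,\dots,y_n$ with $B(y_j,y_j) = 1$ and $B(y_j,y_k) = 0$ for $j \neq k$. Lifting the $x_i$ and $y_j$ to homogeneous elements of $L$ and adjoining $z$, one obtains a basis of $L$ in which the bracket relations are precisely those presenting $H(m,n)$; hence $\dim L = (2m+1 \mid n)$ and $L \cong H(m,n)$.

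I expect the only delicate point to be the bookkeeping at the quotient level: checking that $B$ is well defined and non-degenerate and, more importantly, that lifting a basis of $V$ back to $L$ together with the single element $z$ yields a genuine basis of $L$ carrying exactly the defining relations of $H(m,n)$, so that no hidden relations survive and the abstract isomorphism type is pinned down. The form-theoretic ingredients — Darboux's theorem for the even part and diagonalisation of symmetric forms over an algebraically closed field for the odd part — are classical and can simply be cited.
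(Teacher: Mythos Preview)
Your proof is correct and takes a genuinely different route from the paper's. The paper argues by a double induction: first on $n$ with $m=0$, then on $m$; at each step it picks a suitable element, considers the kernel of its adjoint map, verifies that this kernel is again a special Heisenberg Lie superalgebra of smaller dimension, and invokes the inductive hypothesis to build up the desired basis one or two generators at a time. Your argument instead passes immediately to the quotient $V = L/Z(L)$ and encodes the bracket as a bilinear form $B$, whose non-degeneracy follows from $Z(L)=L'$; the key observation $[L_{\bar 0},L_{\bar 1}]=0$ (forced by the center being even) decouples $B$ into an alternating form on $V_{\bar 0}$ and a symmetric form on $V_{\bar 1}$, and the result then drops out of Darboux's theorem and diagonalisation over an algebraically closed field. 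Your approach is shorter and more structural, making transparent exactly why the even dimension must be $2m+1$ and why algebraic closedness is used (only to normalise the diagonal entries on the odd part); the paper's inductive proof is more self-contained in that it does not appeal to the classification of bilinear forms, but it is considerably longer and its bookkeeping with kernels of adjoint maps is somewhat informal.
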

\begin{proof}
We apply induction two times to prove the theorem. First we prove the result  for $m=0$. Let $L$ be a Heisenberg Lie superalgebra generated by $<z, y_{1},y_{2},\ldots,y_{n}>$. Here our claim is $L\cong H(0,n)$. If $n=1$, we have $L=<z, y_{1}>$. Since $L$ is Heisenberg Lie superalgebra, $L'=Z(L)$ and $\dim L'=1$. So without loss of generality we may assume that $L'=<z>$. As $z\in Z(L)$, we have $[z,y_{1}]=0$. Now if $[y_{1},y_{1}]=0$ then $y_{1}\in Z(L)$ which is a contradiction. Thus, $[y_{1}, y_{1}]=\alpha z$ for some $\alpha\neq 0$. Hence by considering the basis $\{z, \frac{1}{\alpha}y_{1}\}$, we have $L\cong H(0,1)$.

\smallskip
Assume that the result is true for $\dim L \leq n$. Now, consider  $\dim L = n+1$ and $L'=<z>$. Thus, there exist an element $y_{i}\in L$ such that $[y_{i},y_{i}]=z$. Consider the following map
\begin{equation}
\mathrm{ad}_{y_{i}}:L\rightarrow L'=<z>.
\end{equation}
Using the definition of adjoint map 
$$
\mathrm{ad}_{y_{i}}(y_{1})=\cdots=\mathrm{ad}_{y_{i}}(y_{i-1})=\mathrm{ad}_{y_{i}}(y_{i+1})=\cdots=\mathrm{ad}_{y_{i}}(y_{n})=0.
$$
 Let us denote that $N_{y_{i}}=\ker \mathrm{ad}_{y_{i}}$. It is easy to see that $\dim N_{y_{i}}=n$. For $1\leq i \leq n$, we have
 \begin{equation}
 L=N_{y_{i}}+<y_{i}>. 
 \end{equation}
Here our claim is $N_{y_{i}}$ is a special Heisenberg Lie superalgebra. 

\smallskip
Clearly $Z(L)\subset Z(N_{y_{i}})$. Conversly, if $u \in Z(N_{y_{i}})$ then $[u,r]=0$ for all $r \in N_{y_{i}}$ and also $[u,y_{i}]=0$. Thus $u \in Z(L)$. This implies $Z(L)=Z(N_{y_{i}})=<z>$. 

\begin{equation}
N_{y_{i}}' \subseteq L'=<z> = Z(N_{y_{i}})
\end{equation}
If $N_{y_{i}}'=0$ then $Z(N_{y_{i}})=N_{y_{i}}$. This is a contradiction since $\dim N_{y_{i}} = n > 1$. Hence $\dim N_{y_{i}}'=1$, which implies $N_{y_{i}}$ is a special Heisenberg Lie superalgebra with dimension $n$. Now by considering the induction hypothesis there exist   a basis $\{z,y_{1}, \cdots,y_{i-1},y_{i},\cdots y_{n}\}$  for $N_{y_{i}}$ such that $[y_{j},y_{j}]=z$ for $1\leq j\leq n$ and $j\neq i$ and also we have  assumed $[y_{i},y_{i}]=z$. Hence all together we conclude $L\cong H(0,n)$.

\smallskip
Next we induct on $m$. We have already proved that $L\cong H(0,n)$, i.e., the result is true for $m=0$. Let us assume that result is true for $\dim L \leq 2m+n-1$ where $\dim L_{\bar{0}}=2m-1$ and $\dim L_{\bar{1}}=n$. Our claim is to show that the result is true for $\dim L \leq 2m+n+1$ where $\dim L_{\bar{0}}=2m+1$ and $\dim L_{\bar{1}}=n$. Let $L$ be a Heisenberg Lie superalgebra generated by $<z, x_{1},\cdots, x_{m},x_{m+1},\cdots, x_{2m}, y_{1}, \cdots, y_{n} >$ with the assumption that $[y_{j},y_{j}]=z$ for $1\leq j \leq n$. Without loss of generality we can assume $L'=<z>=Z(L)$. Thus there exists some $x_{i}$ such that $[x_{i},x_{m+i}]=z$. Consider the maps
\begin{equation}
\mathrm{ad}_{x_{i}} : L\rightarrow L'=<z>\; \mathrm{and}\; \mathrm{ad}_{x_{m+i}} : L\rightarrow L'.
\end{equation}
Clearly, one can see that
$$N_{x_{i}} := \ker \mathrm{ad}_{x_{i}}=\{z, x_{k}, y_{j}: 1\leq k\leq 2m, 1\leq j \leq n, k \neq m+i \}$$
and 
$$N_{x_{m+i}} := \ker \mathrm{ad}_{x_{m+i}}=\{z, x_{k}, y_{j}: 1\leq k \leq 2m, 1\leq j \leq n , k \neq i \}.$$
Therefore we have 
$$
 L=N_{x_{i}}\cap N_{x_{m+i}}+<x_{i},x_{m+i}>.$$ 
 Also note that $\dim N_{x_{i}}\cap N_{x_{m+i}}=2m+n-1$. If we can show that $N_{x_{i}}\cap N_{x_{m+i}}$ is a special Heisenberg Lie superalgebra then induction hypothesis will do rest of the work. 

\smallskip 
It is easy to see that $Z(L)\subseteq Z(N_{x_{i}}\cap N_{x_{m+i}})$. If $u \in N_{x_{i}}\cap N_{x_{m+i}}$ then $[u,m] = 0$ for all $m \in N_{x_{i}}\cap N_{x_{m+i}}$, i.e., $[u,m] = 0$ for $m \neq x_{i}, x_{m+i}$. On the other hand $[u,x_{i}]=0$ and $[u,x_{m+i}]=0$. Thus, $u \in Z(L)$. Hence 
$Z(L)=<z>=Z(N_{x_{i}}\cap N_{x_{m+i}})$. Therefore,
$$
(N_{x_{i}}\cap N_{x_{m+i}})'\subseteq Z(L)=<z>=Z(N_{x_{i}}\cap N_{x_{m+i}}).$$
If $(N_{x_{i}}\cap N_{x_{m+i}})' = 0$, then $Z(N_{x_{i}}\cap N_{x_{m+i}}) = N_{x_{i}}\cap N_{x_{m+i}}$ but this is impossible as $\dim (N_{x_{i}}\cap N_{x_{m+i}}) =2m+n-1 > 1 $. Thus, $N_{x_{i}}\cap N_{x_{m+i}}$ is a special Heisenberg Lie superalgebra. Using induction hypthesis, there exist a basis $\{z,x_{1},\cdots, x_{i-1},x_{i+1},\cdots, x_{m+i-1},x_{m+i+1},\cdots, x_{2m}, y_{1}, \cdots, y_{n} \}$ such that $[x_{k},x_{m+k}]=z$ for $1\leq k\neq i\leq 2m$. By adding the elements ${x_{i},x_{m+i}}$ to the above basis we have $\dim L= 2m+n+1$ and the result follows from the fact that $[x_{i},x_{m+i}]=z$. This completes the proof of the theorem.
\end{proof}

In the following theorem, we compute multiplier of the special Heisenberg Lie superalgebra.
\begin{theorem}\label{th15}
Let $H(m,n)$ be a special Heisenberg Lie superalgebra with even center of dimension $(2m+1\mid n)$. Then 
$$
\dim \mathcal{M}(H(m,n))=\begin{cases}
2m^2-m-1+2mn+n(n+1)/2 & \mbox{if}\ m + n \geq 2 \\
2 & \mbox{if }\ m=0, n=1\\

2  &\mbox{if }\ m=1, n=0.
\end{cases}
$$
\end{theorem}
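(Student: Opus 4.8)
The plan is to compute $\mathcal{M}(H(m,n))$ from a free presentation, exploiting that $L:=H(m,n)$ is two-step nilpotent. Let $V=V_{\bar{0}}\oplus V_{\bar{1}}$ be the superspace with even basis $X_{1},\dots,X_{2m}$ and odd basis $Y_{1},\dots,Y_{n}$, let $F=F(V)=\bigoplus_{k\geq 1}F_{k}$ be the free Lie superalgebra on $V$ graded by bracket-length, and let $\psi\colon F\to L$ be the epimorphism with $\psi(X_{i})=x_{i}$, $\psi(Y_{j})=y_{j}$, $R=\ker\psi$. Since $C^{2}(L)=[L,L']=0$ we get $\psi(F_{\geq 3})=0$, i.e.\ $F_{\geq 3}\subseteq R$; and as $\psi|_{F_{1}}$ is injective, a homogeneity argument yields $R=U\oplus F_{\geq 3}$, where $U:=F_{2}\cap R=\ker(\psi|_{F_{2}})$. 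Because $\psi(F_{2})=\langle z\rangle\neq 0$ once $m+n\geq 1$, we have $\dim U=\dim F_{2}-1=\binom{2m}{2}+\binom{n+1}{2}+2mn-1=2m^{2}-m-1+2mn+\frac{1}{2}n(n+1)$, which is exactly the asserted value; it remains to prove $\mathcal{M}(L)=U$ when $m+n\geq 2$, and to dispose of the two small cases.

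For the reduction, note $F'=F_{\geq 2}\supseteq R$, so $F'\cap R=R$, while $[F,R]=[F,U]+[F,F_{\geq 3}]=[F_{1},U]\oplus F_{\geq 4}$ with $[F_{1},U]\subseteq F_{3}$ (here one uses $[F_{1},F_{k}]=F_{k+1}$). Hence
\[
\mathcal{M}(L)=\frac{F'\cap R}{[F,R]}=\frac{U\oplus F_{3}\oplus F_{\geq 4}}{[F_{1},U]\oplus F_{\geq 4}}\;\cong\;U\,\oplus\,\frac{F_{3}}{[F_{1},U]},
\]
so $\dim\mathcal{M}(L)=\dim U+\dim F_{3}-\dim[F_{1},U]$, and everything comes down to showing the bracket map $V\otimes U\to F_{3}$ is surjective, i.e.\ $[F_{1},U]=F_{3}$.

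Since $F_{3}=[F_{1},F_{2}]$ and $F_{2}=U+\mathbb{F}w_{0}$ for any $w_{0}\in F_{2}$ with $\psi(w_{0})=z$ (take $w_{0}=[X_{1},X_{m+1}]$ if $m\geq 1$, otherwise $w_{0}=[Y_{1},Y_{1}]$), it suffices to show $[v,w_{0}]\in[F_{1},U]$ for every basis vector $v$ of $V$. This is where $m+n\geq 2$ enters: there is a \emph{second} ``central square'' $w_{1}\in F_{2}$, again of the form $[X_{j},X_{m+j}]$ or $[Y_{k},Y_{k}]$, with $\psi(w_{1})=z$ and built from generators disjoint from those of $w_{0}$. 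Then $w_{0}-w_{1}\in U$, so $[v,w_{0}]=[v,w_{0}-w_{1}]+[v,w_{1}]$ and it remains to place $[v,w_{1}]$ in $[F_{1},U]$. Writing $w_{1}=[a,b]$ and expanding $[v,[a,b]]$ by the graded Jacobi identity, each term is a bracket of one of $v,a,b$ with a bracket of two elements of $\{v,a,b\}$; such a degree-$2$ bracket maps to $z$ under $\psi$ only if it is one of the distinguished central squares, which is excluded once $v$ is disjoint from $w_{1}$, so the other factor lies in $U$. Arranging $w_{0},w_{1}$ to both avoid $v$ — possible for each $v$ precisely because at least two central squares are available — gives $[v,w_{0}]\in[F_{1},U]$ for all $v$; hence $[F_{1},U]=F_{3}$ and $\dim\mathcal{M}(L)=\dim U$.

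Finally, the two cases $(m,n)=(1,0)$ and $(0,1)$ are degenerate: here $\dim F_{2}=1$, so $U=0$, $R=F_{\geq 3}$, and $\mathcal{M}(L)\cong F_{3}$, the degree-$3$ component of $F$, which is checked by direct computation (for $(1,0)$, $H(1,0)$ is the free two-step nilpotent Lie algebra on two even generators, so $F_{3}$ is the degree-$3$ part of the free Lie algebra on two generators, of dimension $\frac{1}{3}(2^{3}-2)=2$). The main obstacle is the surjectivity step $[F_{1},U]=F_{3}$: it needs careful tracking of the super signs in the Jacobi identity together with the combinatorial fact that a disjoint ``spare'' central square $w_{1}$ exists for every prescribed $v$ exactly when $m+n\geq 2$, the two exceptional pairs being precisely those for which no such $w_{1}$ is available.
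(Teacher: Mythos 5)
Your treatment of the main case $m+n\geq 2$ and of $(m,n)=(1,0)$ is correct, and it takes a genuinely different route from the paper. The paper writes down an explicit central extension $K=C+W$ of $H(m,n)$, uses super Jacobi identities to kill the parameters $\eta_{k},\eta_{j}'$, and then asserts that the resulting extension is a stem cover (maximality is not really verified); you instead compute $\mathcal{M}(L)=(F'\cap R)/[F,R]$ directly from the length-graded free presentation on the $2m+n$ generators, obtain $\mathcal{M}(L)\cong U\oplus F_{3}/[F_{1},U]$ via $F_{k+1}=[F_{k},F_{1}]$, and prove $[F_{1},U]=F_{3}$ by the ``spare central square'' argument. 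That surjectivity argument is sound: for each generator $v$ a central square $[a,b]$ avoiding $v$ exists precisely when $m+n\geq 2$, and then $[v,a],[v,b]\in U$ because the only nonvanishing brackets of generators in $H(m,n)$ are the distinguished pairs, so the graded Jacobi expansion of $[v,[a,b]]$ lands in $[F_{1},U]$. This route has the advantage of resting only on the Hopf formula (which the paper's Lemma 2.2 identifies with $\mathcal{M}$) rather than on the unproved maximality of a constructed defining pair. One small imprecision: to get $R=U\oplus F_{\geq 3}$ you need not just injectivity of $\psi|_{F_{1}}$ but also $\psi(F_{1})\cap\langle z\rangle=0$, which of course holds since $\psi(F_{1})$ complements $L'$.

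The genuine gap is the case $(m,n)=(0,1)$, which you dispose of with ``checked by direct computation'' while only carrying out the computation for $(1,0)$. If you actually do it, your own method gives $0$, not the claimed $2$: over a field of characteristic zero the graded Jacobi identity applied to $(Y,Y,Y)$ forces $[Y,[Y,Y]]=0$ for any odd $Y$, so the free Lie superalgebra on one odd generator is $\langle Y,[Y,Y]\rangle$, i.e.\ two-dimensional with $F_{3}=0$; in fact $F\cong H(0,1)$, so $R=0$ and $\mathcal{M}(H(0,1))=0$. Thus your argument cannot produce the value $2$ stated in the theorem for this case. The discrepancy traces back to the statement itself rather than to your method: the paper's construction for $(0,1)$, with $[y,y]=z+w$ and $[y,z]=\eta\neq 0$, violates the same identity $[y,[y,y]]=0$ and so is not a Lie superalgebra in characteristic zero; equivalently, in any stem extension $0\to M\to K\to H(0,1)\to 0$ one has $K'=\langle[\tilde{y},\tilde{y}]\rangle$ one-dimensional mapping onto $\langle z\rangle$, hence $K'\cap M=0$ and $M=0$. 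Still, as a proof of the statement as given, the $(0,1)$ case fails and must be flagged explicitly rather than waved through.
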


\begin{proof}
Let us first assume that $m + n  \geq 2$. We are interested to find a stem cover for the special Heisenberg Lie superalgebra $H(m,n)$. Let $W$ be a vector superspace with a basis 

$$\{w_{i},v_{j},w_{k,l}, v_{k',l'}, \eta_{k}, \eta_{j}',\gamma_{k,j}\}.$$ Consider a vector superspace $C$ with a basis, 

$$\{z, x_{1}, x_{2}, \cdots , x_{m}, x_{m+1}, \cdots, x_{2m},\ y_{1}, y_{2}, \cdots, y_{n}\}$$
 and put $K= C+ W$. We want to define a superbracket in $K$ which turn $K$ into a Lie superalgebra.
\begin{align*}
[x_{i}, x_{m+i}] & = z + w_{i},\;\; 1\leq i \leq m\\
[y_{j}, y_{j}]   & = z + v_{j},\;\; 1\leq j \leq n\\
[x_{k}, x_{l}]   & = w_{k, l},\; \mathrm{for}\; 1 \leq k < l \leq 2m\\
[y_{k'}, y_{l'}]&= v_{k',l'},\; \mathrm{for}\; 1 \leq k' < l'\leq n
\end{align*}
such that $(k, l) \not \in \{(i_{1}, m+{i_{1}}) | 1 \leq i_{1} \leq m\}$ and $(k', l') \not \in \{(i_{2}, i_{2})| 1 \leq i_{2} \leq n \}$.

Also, 
\begin{align*}
[x_{k}, z] &= \eta_{k}\; \mathrm{for}\;1 \leq k \leq 2m\\
[y_{j}, z] &= \eta_{j'}\; \mathrm{for}\;1 \leq j \leq n\\
[x_{k}, y_{j}] &= \gamma_{k,j}\; \mathrm{for}\;1 \leq k \leq 2m\; \mathrm{and}\;1 \leq j \leq n.\\
\end{align*}

Now we see that the above multiplication is true if we replace $z$ by $\zeta,\; w_{1} = 0,\;w_{i}$ by $\hat{w}_{i} = w_{i} - w_{1}$ and $v_{j}$ by $\hat{v}_{j} = v_{j} - w_{1}$. Let $\zeta=[x_{1}, x_{m+1}] = z + w_{1}$. Then for $2 \leq i \leq m$, and $1 \leq j \leq n$ we have 
\begin{align*}
 [x_{i}, x_{m+i}] &= \zeta + (w_{i}-w_{1}) = \zeta + \hat{w}_{i},\\
  [y_{j}, y_{j}]  &= \zeta +(v_{j}-w_{1}) = \zeta + \hat{v}_{j}.
 \end{align*}
Now,
\begin{align*}
[x_{k}, \zeta] = [x_{k}, z+w_{1}] &= [x_{k}, z] = \eta_{k}\\
[y_{j}, \zeta] = [y_{j}, z+w_{1}] &= [y_{j}, z] = \eta_{k}.'
\end{align*}

For $1\leq k \leq 2m, k \neq m+1$ and using the super Jacobi identity for the Lie superalgebra 
\begin{align*}
0 & =J(x_{1}, x_{m+1}, x_{k})\\
  & =[[x_{1},x_{m+1}], x_{k}]+[[x_{m+1}, x_{k}],x_{1}]+[[x_{k}, x_{1}],x_{m+1}]\\
  & = [\zeta , x_{k}] + [w_{m+1,k}, x_{1}] + [w_{k,1}, x_{m+1}]  \\
  & = [\zeta, x_{k}] + 0 + 0 = -\eta_{k}, 
\end{align*}
Again taking the Jacobi idenetity of $(x_2, x_{m + 2}, x_{m + 1})$ one can see that $n_{m+1} = 0$. Similarly, we can make $\eta_{j}'$ equal to zero using the super Jacobi identity for the triple $(x_{1},x_{m+1},y_{j})$ for $1\leq j \leq n$, i.e., $0=J(x_{1},x_{m+1},y_{j})=\eta_{j}'$. Therefore $K$ is a Lie superalgebra and maximal basis of $W$ consists of all elements 

\begin{align*}
w_{i} &\quad 1< i\leq m,\\
v_{j} & \quad 1\leq j \leq n,\\
w_{k,l} & \quad 1 \leq k < l \leq 2m,\; \mathrm{and}\;\; (k, l) \not \in \{(i_{1}, m+{i_{1}}) \mid 1 \leq i_{1} \leq m\} \\
v_{k',l'} & \quad 1 \leq k' < l' \leq n,\; \mathrm{and}\;\; (k', l') \not \in \{(i_{2}, i_{2}) \mid 1 \leq i_{2} \leq n\}\\
\gamma_{k,j} & \quad 1 \leq k \leq 2m\;\mathrm{and}\;1 \leq j \leq n.
 \end{align*}
 
One can observe that $w_{i}, v_{j}, w_{k,l}, v_{k',l'}\in W_{\bar{0}}$ and $\gamma_{k,j} \in W_{\bar{1}}$. Now, $0\longrightarrow W \longrightarrow K \longrightarrow H(m,n) \longrightarrow 0$ is a stem cover for $H(m,n)$. Thus,
 
\begin{align*}
  \dim W = \dim \mathcal{M}(H(m,n)) &= m-1+n+\binom{2m}{2}-m+\binom{n}{2}+2mn \\
  & = 2m^2-m-1+2mn+n(n+1)/2.
 \end{align*}
Now, suppose that $m=0$ and $n=1$. Let $W$ and $C$ be the vector superspace with bases $\{w, \eta\}$ and $\{z;y\}$ respectively. We show, $K=W+C$ is a Lie superalgebra. Writting the super brackets as
$$
[y,y]=z+w;\; \mathrm{and}\; [y,z] = \eta,
$$
 K is a Lie superalgebra. The maximal basis of $W$ is $\{w, \eta\}$. Therefore, $0\longrightarrow W \longrightarrow K \longrightarrow H(m,n) \longrightarrow 0$ is a stem cover for $H(m,n)$ and thus 
 $$
 \dim W = \dim \mathcal{M}(H(m,n))=2.
 $$
 Finally when $m=1$ and $n=0$, $H(m,n)$ is a Heisenberg Lie algebra of dimension $3$ and in this case dimension of the multiplier is $2$ as shown in \cite{Batten1993}. This completes the proof of the theorem.
\end{proof}
The following is a consequence of Theorem \ref{th15}.
\begin{corollary}[See Example 3 in \cite{BS1996}]
Let $H(m)$ be a special Heisenberg Lie algebra of dimension $2m+1$. Then  
$$
\dim \mathcal{M}(H(m))=\begin{cases}
2m^2-m-1 & \mbox{if}\ m > 1 \\
2 & \mbox{if }\ m= 1.
\end{cases}
$$
\end{corollary} 
The following is an immediate consequence of Theorem \ref{lem8} and Theorem \ref{th12} and \ref{th15}.
\begin{corollary}
Suppose $A$ is an abelian Lie superalgebra of dimension $(m-2p-1\mid n-q)$ and suppose $X:= H(p,q) \oplus A$ is a Lie superalgebra. Then 
$$
\dim \mathcal{M}(X)=\begin{cases}
\frac{1}{2}\left[(m+n)^2-(3m + n)\right] & \mbox{if}\ p+q > 1 \\
 2 + \frac{1}{2}\left[(m+n)^2-(3m + n)\right] & \mbox{if }\ p+q= 1.
\end{cases} 
$$
\end{corollary}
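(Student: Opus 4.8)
The plan is to apply the K\"unneth-type formula of Theorem~\ref{th12} to the decomposition $X = H(p,q)\oplus A$ and then collect terms. First I would record that
\[
\dim X = (2p+1\mid q) + (m-2p-1\mid n-q) = (m\mid n),
\]
so that the right-hand side of the asserted identity makes sense, and that Theorem~\ref{th12} gives
\[
\dim \mathcal{M}(X) = \dim \mathcal{M}(H(p,q)) + \dim \mathcal{M}(A) + \dim\!\left(\frac{H(p,q)}{H(p,q)'}\otimes \frac{A}{A'}\right).
\]

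Next I would evaluate the three summands. Since $A$ is abelian of superdimension $(m-2p-1\mid n-q)$, Theorem~\ref{lem8} yields
\[
\dim \mathcal{M}(A) = \frac{1}{2}\!\left[(m+n-2p-q-1)^2 + (n-q) - (m-2p-1)\right].
\]
For the Heisenberg factor, $H(p,q)' = Z(H(p,q))$ is one-dimensional and even, so $\dim H(p,q)/H(p,q)' = 2p+q$ while $\dim A/A' = m+n-2p-q-1$; hence the tensor-product term equals $(2p+q)(m+n-2p-q-1)$. Finally $\dim \mathcal{M}(H(p,q))$ is supplied by Theorem~\ref{th15}, and this is where the split $p+q>1$ versus $p+q=1$ enters.

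Then I would substitute and simplify. Writing $t = 2p+q$, the main-case value $2p^2 - p - 1 + 2pq + q(q+1)/2$ of Theorem~\ref{th15} rewrites as $\frac{1}{2}t^2 + \frac{1}{2}(q-2p) - 1$, and the $\frac{1}{2}(q-2p)$ contribution cancels against the matching term produced by $\dim \mathcal{M}(A)$. What survives is
\[
\frac{1}{2}t^2 + \frac{1}{2}(m+n-t-1)^2 + t(m+n-t-1) = \frac{1}{2}(m+n-1)^2,
\]
and combining this with the leftover constants gives exactly $\frac{1}{2}\!\left[(m+n)^2 - (3m+n)\right]$. For $p+q=1$, i.e.\ $(p,q)=(0,1)$ or $(p,q)=(1,0)$, the same computation applies except that Theorem~\ref{th15} replaces the value $0$ of $\frac{1}{2}t^2 + \frac{1}{2}(q-2p)-1$ by $2$, which accounts for the extra summand $2$ in the stated formula.

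The computation is routine; the only delicate point is the bookkeeping of even versus odd dimensions --- the $(n-m)$-type correction in Theorem~\ref{lem8} and the fact that the total dimension of the tensor product is the product of total dimensions --- together with checking that the $q-2p$ discrepancies genuinely cancel, so that the answer depends only on $m+n$ (and on whether $p+q=1$). There is thus no real obstacle here: the content is precisely the verification that all $(p,q)$-dependence outside the $p+q=1$ exception disappears.
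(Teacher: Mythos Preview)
Your proposal is correct and follows exactly the approach the paper intends: the corollary is stated as an immediate consequence of Theorems~\ref{lem8}, \ref{th12}, and~\ref{th15}, and you simply carry out that substitution and simplification explicitly. The bookkeeping with $t=2p+q$ and the observation that $\tfrac{1}{2}t^2+\tfrac{1}{2}(q-2p)-1$ equals $0$ in both $p+q=1$ cases is a clean way to see why the extra $2$ appears precisely there.
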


\section{Main Theorem}\label{mt}

The following result provides an upper bound for dimension of multiplier of nilpotent Lie superalgebra which is less than the bound in Corollary \ref{cor11b} except for the cases $L \cong H(0,1)$ and $L\cong H(1,0).$

\begin{theorem}\label{th16}
Let $L = L_{\bar{0}} \oplus L_{\bar{1}}$ be a nilpotent Lie superalgebra of dimension $(m\mid n)$ and $\dim L'= (r\mid s)$ with $r+s \geq 1$. Then 
\begin{equation}
\dim \mathcal{M}(L)\leq \frac{1}{2}\left[(m + n + r + s - 2)(m + n - r -s -1) \right] + n + 1.
\end{equation}
Moreover, if $r+s = 1$, then the equality holds if and only if 
$$
L \cong \begin{cases}
H(1,0) \oplus A_{1} & \mbox{if}\ r = 1, s = 0; \\
H(0,1) \oplus A_{2} & \mbox{if}\ r = 0, s = 1,
\end{cases}
$$
where $A_{1}$, $A_{2}$ are abelian Lie superalgebras with $\dim A_{1}=(m-3 \mid n)$, $\dim A_{2}=(m-1 \mid n-1)$ respectively and $H(1,0), H(0,1)$ are special Heisenberg Lie superalgebras of dimension $3$ and $2$ respectively.
\end{theorem}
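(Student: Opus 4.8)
The plan is to run the standard induction-on-structure argument, using the inequality of Corollary \ref{cor11b} together with the ``slicing off a central ideal'' machinery of Theorem \ref{th11} and Corollary \ref{cor10}. First I would treat the base case where $L$ itself is a special Heisenberg Lie superalgebra, i.e. $\dim L' = (1\mid 0)$ or $(0\mid 1)$ and $L' = Z(L)$. By Theorem \ref{th14} such an $L$ is $H(m,n)$ with even center, or its odd-center analogue; Theorem \ref{th15} gives $\dim\mathcal{M}(H(m,n))$ explicitly, and a direct comparison with the claimed bound $\tfrac12[(m+n+r+s-2)(m+n-r-s-1)]+n+1$ with $(r,s)=(1,0)$ (resp. $(0,1)$) shows the bound holds, with room to spare unless $m+n$ is as small as possible — which pins down $H(1,0)$ and $H(0,1)$ as the only Heisenberg algebras meeting equality. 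Then a general $L$ with $\dim L' = (1\mid 0)$ or $(0\mid 1)$ but $L'\subsetneq Z(L)$ is handled by writing $L = H \oplus A$ after splitting off a central complement (two-step nilpotent with one-dimensional derived algebra forces this decomposition), and applying Theorem \ref{th12}; this is exactly where $A_1$, $A_2$ enter, and equality forces $A$ abelian of the stated dimension.

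Next, for the inductive step with $r+s\geq 2$, I would pick a one-dimensional graded ideal $K$ contained in $Z(L)\cap L'$ (possible since $L$ is nilpotent and $L'\neq 0$), set $H = L/K$, and note $\dim H = (m-1\mid n)$ or $(m\mid n-1)$ while $\dim H' = (r-1\mid s)$ or $(r\mid s-1)$ — here one must check $H' \neq 0$, i.e. that $L'$ is not itself one-dimensional, which is where the case split on whether $r+s=2$ with $L$ ``almost Heisenberg'' needs care. By Corollary \ref{cor10}, $\dim\mathcal{M}(L) \leq \dim\mathcal{M}(L) \text{ via } \dim\mathcal{M}(H)\leq \dim\mathcal{M}(L)+\dim(K\cap L') = \dim\mathcal{M}(L)+1$; but the useful direction is the reverse bound coming from Theorem \ref{th11}(1): $\dim\mathcal{M}(L)+\dim(L'\cap K)\leq \dim\mathcal{M}(H)+\dim\mathcal{M}(K)+\dim(H/H'\otimes K/K')$. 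With $K$ one-dimensional central in $L'$, $\mathcal{M}(K)$ and $K/K'$ are controlled ($\dim\mathcal{M}(K)\le 1$ according to whether $K$ is even or odd, by Theorem \ref{lem4a}), and $\dim(H/H'\otimes K/K')$ is just $\dim H/H'$, which is $m+n - (r+s) + 1$ or so. Feeding the inductive hypothesis for $H$ into this and simplifying the quadratic, the target inequality for $L$ should drop out; the arithmetic is a one-variable polynomial comparison of the type already appearing in the proofs of Theorems \ref{lem4}--\ref{th11}.

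For the equality analysis when $r+s=1$: if equality holds in the displayed bound, then every inequality used in the reduction must be an equality. Tracing back, equality in Theorem \ref{th11}(2) forces $L/[R,F]$ to have derived algebra of the extremal dimension, which as in Theorem \ref{lem8} forces a near-abelian structure; combined with $L' \ne 0$ this squeezes $L$ into the form $H(1,0)\oplus A_1$ or $H(0,1)\oplus A_2$. Conversely, plugging these into Theorem \ref{th12} and using $\dim\mathcal{M}(H(1,0)) = \dim\mathcal{M}(H(0,1)) = 2$ from Theorem \ref{th15}, together with $\dim\mathcal{M}(A_i)=\tfrac12[(p+q)^2+(q-p)]$ for abelian $A_i$ of dimension $(p\mid q)$ from Theorem \ref{lem8} and the tensor term, one checks the sum equals the bound exactly.

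The main obstacle I anticipate is the equality case rather than the inequality itself: pushing equality through Theorem \ref{th11} back to a structural conclusion about $L$ requires knowing precisely which two-step nilpotent Lie superalgebras saturate the bound, and ruling out sporadic low-dimensional exceptions (the very small cases $m+n$ minimal, where the Heisenberg base-case computations of Theorem \ref{th15} have their own exceptional values $2$). Care is also needed that the chosen central ideal $K$ can always be taken inside $L'$ so that $\dim(L'\cap K)=1$; if $L'$ happens to be one-dimensional we are already in the base case, so this dichotomy organizes the whole argument.
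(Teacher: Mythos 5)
Your proposal follows essentially the same route as the paper: for $r+s=1$ it decomposes $L$ as a special Heisenberg summand plus an abelian central complement and computes $\dim \mathcal{M}(L)$ exactly via Theorems \ref{th12}, \ref{th15} and \ref{lem8} (equality occurring precisely for $H(1,0)$ and $H(0,1)$), and for $r+s\geq 2$ it factors out a one-dimensional graded ideal $K\subseteq L'\cap Z(L)$ and feeds the induction hypothesis into Theorem \ref{th11}(1), exactly as the paper does. The only superfluous element is the proposed equality-tracing through Theorem \ref{th11}(2) in your last paragraph: since the base-case decomposition holds unconditionally when $r+s=1$, your exact computation there already settles the equality characterization, which is also how the paper argues.
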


\begin{proof}
First assume that $\dim L'= (1\mid 0)$. Then $L/L' = L_{\bar{0}}/(L'_{\bar{0}}+L'_{\bar{1}}) \oplus L_{\bar{1}}/[L_{\bar{0}}, L_{\bar{1}}]$ is an abelian superalgebra of $\dim (L/L') =(m-1 \mid n) $. Since $L$ is nilpotent we have $( L'_{\bar{0}}+ L'_{\bar{1}}) \subseteq Z(L)_{\bar{0}}$ and $[L'_{\bar{0}},L'_{\bar{1}}] \subseteq Z(L)_{\bar{1}}$. Let us suppose that $(H/L')_{\bar{0}} \oplus  (H/L')_{\bar{1}}$ is a complement of $(Z(L)/L')_{0} \oplus (Z(L)/L')_{1}$ in $(L/L')_{\bar{0}} \oplus (L/L')_{\bar{1}}$. Hence, $L = (H_{\bar{0}} + Z(L)_{\bar{0}} \oplus (H_{\bar{1}} + Z(L)_{\bar{1}}$ and also it is easy to see $L'=H'$, i.e., $L'_{\bar{0}} + L'_{\bar{1}} = H'_{\bar{0}} + H'_{\bar{1}}$ and $[L_{\bar{0}}, L_{\bar{1}}] = [H_{\bar{0}}, H_{\bar{1}}]$. Now our claim is $(H \cap Z(L))_{\bar{0}} \subseteq H_{\bar{0}}$ and $(H \cap Z(L))_{\bar{1}} \subseteq H_{\bar{1}}$.

\smallskip 
Since $H/L'$ and $Z(L)/L'$ are complementary subspaces we have
\begin{equation*}
(H_{\bar{0}} \cap Z(L)_{\bar{0}})/ (L'_{\bar{0}}+L'_{\bar{1}}) \oplus (H_{\bar{1}} \cap Z(L)_{\bar{1}})/[L'_{\bar{0}}, L'_{\bar{1}}] = \phi
\end{equation*} 
which further implies that
\begin{equation*}
(H_{\bar{0}} \cap Z(L)_{\bar{0}})/ (L'_{\bar{0}}+L'_{\bar{1}})  = \phi\;\; \mathrm{and}\;\; (H_{\bar{1}} \cap Z(L)_{\bar{1}})/ [L'_{\bar{0}}, L'_{\bar{1}}] = \phi. 
\end{equation*}
 The first part of the above equation gives $(H_{\bar{0}} \cap Z(L)_{\bar{0}}) \in (L'_{\bar{0}}+L'_{\bar{1}}) = (H'_{\bar{0}}+ H'_{\bar{1}})\subseteq H_{\bar{0}}$ and similarly the second part gives $(H_{\bar{1}} \cap Z(L)_{\bar{1}}) \in [H'_{\bar{0}}, H'_{\bar{1}}] \subseteq H_{\bar{1}}$ and this completes the proof of the claim. Hence $H\cap Z(L)\subseteq H$ and so $Z(H) = L'$.

\smallskip
Since $(L'_{\bar{0}} + L'_{\bar{1}}) \subseteq Z(L)_{\bar{0}}$ and  $[L_{\bar{0}}, L_{\bar{1}}] \subseteq Z(L)_{\bar{1}}$, $L'$ must have a complement $K = K_{\bar{0}} \oplus K_{\bar{1}}$ in $Z(L)_{\bar{0}} \oplus Z(L)_{\bar{1}}$. Hence 
$(L'_{\bar{0}} + L'_{\bar{1}}+ K_{\bar{0}}) \oplus ([L_{\bar{0}}, L_{\bar{1}}]+ K_{\bar{1}}) = Z(L)_{\bar{0}} \oplus Z(L)_{\bar{1}}$ which implies that $L\cong (H_{\bar{0}} \oplus K_{\bar{0}}) \oplus (H_{\bar{1}} \oplus K_{\bar{1}})$. By Theorem \ref{th12},
\begin{equation}\label{eq16a}
\dim \mathcal{M}(L) = \dim \mathcal{M}(H) +\dim \mathcal{M}(K) +\dim \left(H/H'\otimes K/K'\right).
\end{equation}

Here $H = H_{\bar{0}}\oplus H_{\bar{1}}$ is a special Heisenberg Lie superalgebra of dimension $(2p+1\mid q)$ for some non-negative integers $p,q$ and $K = K_{\bar{0}}\oplus K_{\bar{1}}$ is an abelian Lie superalgebra.

\smallskip
{\bf Case 1} $(p + q \geq 2).$

\smallskip
 From Theorem \ref{lem8} and Theorem \ref{th15},
\begin{align*}
\dim \mathcal{M}(H) & = 2p^2 - p - 1 + 2pq + q(q+1)/2,\\
\dim \mathcal{M}(K) & = \frac{1}{2}\left[ (m -2p -1 + n-q)^2 + n-q -m +2p +1)\right],\\
\dim (K/K' \otimes H/H') & = (m-2p-1 + n-q)(2p +q),  
\end{align*}

and putting these in \eqref{eq16a},
\begin{align*}
\dim (\mathcal{M}(L))  = & 2p^2 - p - 1 + 2pq + q(q+1)/2\\
					     & +\frac{1}{2}\left[ (m -2p -1 + n-q)^2 + n-q -m +2p +1)\right]\\
					     & + (m-2p-1 + n-q)(2p +q)\\
					     & = \frac{1}{2}\left[ (m+n)^2- (3m+n)\right] < \frac{1}{2}\left[ (m+n-1)(m+n-2)\right] + n +1.   
\end{align*}

\smallskip
{\bf Case 2} $(p + q = 1).$ 

\smallskip
Here we have two choice for $p$ and $q$. 

\smallskip
{\bf Subcase 1} ($p=1$ and  $q = 0).$ 

\smallskip
Thus, $\dim H = (3\mid 0)$ and $\dim K = (m-3\mid n)$. Using Theorem \ref{lem8} and Theorem \ref{th15} in \eqref{eq16a}, 
\begin{align*}
\dim (\mathcal{M}(L))  = & 2 + \frac{1}{2}\left[ (m-3+ n)^2 + (n-m+3)\right] + 2(m-3 + n)\\
					     & = 2 + \frac{1}{2}\left[ (m+n)^2- (3m+n)\right].   
\end{align*}

\smallskip
{\bf Subcase 2} $(p=0$ and  $q = 1).$

\smallskip
Here we have $\dim H = (1\mid 1)$ and $\dim K = (m-1\mid n-1)$. Similarly,
\begin{align*}
\dim (\mathcal{M}(L))  = & 2 + \frac{1}{2}\left[ (m-1+ n-1)^2 + (n-1-m+1)\right] + (m-1 + n-1)\\
					     & = 2 + \frac{1}{2}\left[ (m+n)^2- (3m+n)\right].   
\end{align*}

To complete the proof, we use the induction on $\dim L'$. Let $\dim L'= r+s >1$. Since $L'_{\bar{0}} + L'_{\bar{1}} \subseteq Z(L)_{\bar{0}}$ and $[L_{\bar{0}}, L_{\bar{1}}]\subseteq  Z(L)_{\bar{1}}$ we have $\left((L'_{\bar{0}} + L'_{\bar{1}}) \cap Z(L)_{\bar{0}}\right) \oplus \left([L_{\bar{0}},L_{\bar{1}}] \cap Z(L)_{\bar{1}}\right) \neq \phi$. Thus, one can choose an one dimensional graded ideal $K=K_{\bar{0}} \oplus K_{\bar{1}}$ of $L$ in $(L' \cap Z(L))_{\bar{0}} \oplus (L' \cap Z(L))_{\bar{1}}$. Since $\dim K =1$, again we have to consider two cases separately, i.e., $\dim K_{\bar{0}} =1$ and $\dim K_{\bar{0}} =0$. 

\smallskip
{\bf Case 1}  $(\dim K_{\bar{0}}= 1$ and $\dim K_{\bar{1}} = 0).$

\smallskip
 Now by using Theorem \ref{th11},
$$
\dim (\mathcal{M}(L)) \leq \dim (\mathcal{M}(L/K)) + \dim (\mathcal{M}(K)) + \dim (L/L'\otimes K)-1. $$
Here we use the induction hypothesis for the Lie superalgebra $L/K$ with $\dim (L/K) = (m-1\mid n )$ and $\dim (L/K)' = (r-1\mid s )$. Therefore, 
\begin{align*}
\dim (\mathcal{M}(L)) & \leq \frac{1}{2}\left[ (m+n +r+s-4)(m+n-r-s-1)\right] + n + 1 +( m+n-r-s) - 1\\
					  & =\frac{1}{2}\left[ (m+n +r+s-1)(m+n-r-s-2)\right]  + n +1.
\end{align*}

\smallskip
{\bf Case 2} $(\dim K_{\bar{0}} =0$ and $\dim K_{\bar{1}} =1).$

\smallskip
 Similarly, using the induction hypothesis 
for the Lie superalgebra $L/K$ with $\dim (L/K) = (m\mid n-1 )$ and $\dim (L/K)' = (r\mid s -1 ),$ we have 
\begin{align*}
\dim (\mathcal{M}(L)) & \leq \frac{1}{2}\left[ (m+n +r+s-4)(m+n-r-s-1)\right] + (n-1) + 1 + 1 +( m+n-r-s) - 1\\
					  & =\frac{1}{2}\left[ (m+n +r+s-1)(m+n-r-s-2)\right]  + n +1.
 \end{align*}
This completes the proof of the theorem.
\end{proof}

The following is a consequence of Theorem \ref{th16}.
\begin{corollary}[See Theorem 3.1 in \cite{NR2011}]
Let $L$ be a nilpotent Lie algebra of $\dim (L) = n$ and $\dim(L')= r\geq 1$. Then 
$$
\dim \mathcal{M}(L)\leq \frac{1}{2}(n+r-2)(n-r-1)+1. $$
\end{corollary}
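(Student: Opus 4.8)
The plan is to deduce this corollary directly from Theorem \ref{th16} by viewing the ordinary Lie algebra $L$ as a Lie superalgebra concentrated in even degree, that is, with $L_{\bar 0} = L$ and $L_{\bar 1} = 0$. Under this identification the superdimension of $L$ is $(n \mid 0)$, and since the odd part is trivial the odd component $[L_{\bar 0}, L_{\bar 1}]$ of the derived algebra vanishes, so $\dim L' = (r \mid 0)$ in the super sense.

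First I would verify the two compatibility points that make this reduction legitimate. A free presentation of $L$ by the ordinary free Lie algebra on an even generating set is at the same time a free presentation in the category of Lie superalgebras (the free Lie superalgebra on a purely even $\mathbb{Z}_2$-graded set is just the ordinary free Lie algebra, all Koszul signs being trivial); hence the super-multiplier $([F,F]\cap R)/[F,R]$ agrees with the classical Schur multiplier of $L$. Likewise, nilpotency of $L$ as an ordinary Lie algebra coincides with nilpotency in the sense of Lemma \ref{lem2}, since the auxiliary sequence $C^{k}(L_{\bar 1})$ is identically zero. Thus $L$, regarded as a superalgebra, satisfies the hypotheses of Theorem \ref{th16}.

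Next I would apply Theorem \ref{th16} with even dimension $n$, odd dimension $0$, and $\dim L' = (r \mid 0)$; in the notation of that theorem this means substituting $m \mapsto n$, the odd dimension $\mapsto 0$, the even derived dimension $\mapsto r$, and $s \mapsto 0$. The standing hypothesis $r + s \geq 1$ of the theorem becomes $r \geq 1$, which is assumed. Reading off the bound then gives
$$\dim \mathcal{M}(L) \leq \frac{1}{2}\big[(n + r - 2)(n - r - 1)\big] + 0 + 1 = \frac{1}{2}(n + r - 2)(n - r - 1) + 1,$$
which is precisely the claimed inequality.

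Because this is a pure specialization of an already-proved theorem, there is no substantive estimate to carry out and hence no real obstacle; the only delicate point is the bookkeeping of the two compatibility claims in the second step, namely that the super-multiplier of a purely even Lie superalgebra reduces to the ordinary Lie-algebra multiplier and that the two notions of nilpotency agree. Both follow at once from the definitions, so I would record them briefly and then simply extract the bound.
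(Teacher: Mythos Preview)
Your proposal is correct and is exactly the approach the paper intends: the corollary is stated immediately after Theorem~\ref{th16} as a direct consequence, obtained by regarding the Lie algebra as a purely even Lie superalgebra and specializing $(m\mid n)\mapsto(n\mid 0)$, $(r\mid s)\mapsto(r\mid 0)$. Your added remarks on the compatibility of the multiplier and of nilpotency under this identification are more than the paper itself spells out, but they are accurate and harmless.
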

Moreover, if $r=1$, then the equality holds if and only if $L\cong H(1) + A$, where $A$ is an abelian Lie algebra of $\dim (A) = n-3$.

%\section{Acknowledgement}

\medskip

\end{document}